\newtheorem{theorem}{Theorem}[section]
\newtheorem{lemma}[theorem]{Lemma}
\title{Tight Frames generated by a graph Short-time Fourier Transform}
\date{\today}
\author{Martin Buck}
\address{Department of Mathematics, Tufts University, Medford MA 02131, USA}
\email{martin.buck@tufts.edu}
\author{Kasso A.~Okoudjou}
\address{Department of Mathematics, Tufts University, Medford MA 02131, USA}
\email{kasso.okoudjou@tufts.edu}
\begin{document}

\begin{abstract}
A \textit{graph short-time Fourier transform} is defined using the eigenvectors of the graph Laplacian and a graph heat kernel as a window parametrized by a nonnegative time parameter $t$. We show that the corresponding Gabor-like system forms a frame for $\mathbb{C}^d$ and give a description of the spectrum of the corresponding frame operator in terms of the graph heat kernel and the spectrum of the underlying graph Laplacian. 
For two classes of algebraic graphs, we prove the frame is tight and independent of the window parameter $t$.
\end{abstract}

\subjclass[2020]{Primary 42C15 Secondary 94A12}
\keywords{Time-frequency Analysis, Tight Frames, Spectral Graph Theory, Algebraic Graph Theory, Graph Heat Kernels}
\maketitle
\section{Introduction and Background}

The short-time Fourier transform is an essential and widely used tool in Fourier analysis, quantum mechanics, and signal processing. Pioneered by J. von Neumann and Nobel prize winner Dennis Gabor \cite{gabor1946theory} in the 20th century, the short-time Fourier transform (STFT) provides joint time-frequency or space-frequency information of a function defined on $\mathbb{R}^d$. This is accomplished by localizing the standard Fourier transform via a \textit{window} function. As a result, the STFT combines features of both the function and its Fourier transform in a way that provides important local frequency information that traditional Fourier transform methods cannot provide. For example, in speech analysis and audio signal processing the STFT is used to detect pitch changes and transient events in the underlying signal \cite{smith2011spectral} \cite{griffin1984signal}. In image processing and optics, the STFT and associated Gabor system has been applied to denoising image patches, feature extraction, and coherent optical setups \cite{dawood2013efficient} \cite{bastiaans1998gabor}. Windowed
Fourier methods like the STFT and the related wavelet transforms are fundamental to communication
algorithms and to compression algorithms such as JPEG \cite{marcellin2000overview} \cite{baggett1990processing} \cite{galli2008recent}. In addition to its utility in application, the STFT draws from diverse theories in mathematics including complex analysis, partial differential equations, the Heisenberg group, representation theory, and operator algebras \cite{grochenig2001foundations} \cite{folland1989harmonic}.

The classical STFT and related Gabor transformation are defined when the signal of interest $f$ and the window $g$ belongs to dual function spaces, e.g.,  $f, g \in L^2(\mathbb{R}^d)$, or $f$ is a tempered distribution and $g$ is a Schwartz function defined on $\mathbb{R}^d$ \cite{grochenig2001foundations}. More generally, these transformations can be extended to signals defined in non-Euclidean spaces. 
%For example, pairs $(f,g)$ that lie in conjugate dual spaces $\big(L^p(\mathbb{R})^d), L^q(\mathbb{R}^d)\big)$, or the continuous functions with compact support and bounded Radon measures, or the Schwartz functions and tempered distributions \cite{grochenig2001foundations}. 
For example, see \cite{hansen2002gabor} \cite{fujita2017gabor} for the case of signals defined on a sphere.  One can therefore ask how to generalize the STFT for signals defined on less regular spaces. 
%This begs the question: \textit{in what other spaces can we successfully define and meaningfully extend notions from classical harmonic analysis such as the STFT}? 
In this paper, we revisit this question for signals defined on (finite) graphs by  defining a \textit{graph short-time Fourier transform} that is built on
\begin{enumerate}
    \item The eigenvectors $\{\phi_v\}_{v \in V}$ of the graph Laplacian $L$ to perform the graph Fourier transform
    \item The graph heat kernel $H_t \coloneqq e^{-tL}$ as a window to localize and translate the graph Fourier transform. 
\end{enumerate} Our approach is related to the vertex-frequency analysis introduced in \cite{shuman2016vertex} and will lead to the construction of a time-dependent family of finite Gabor-like frames for $\mathbb{C}^d$.  One of the major difficulties in defining windowed Fourier transform methods on irregular spaces such as graphs is in defining meaningful notions of graph harmonics and graph translation \cite{shuman2013emerging} \cite{gavili2017shift}. On $\mathbb{R}^d$, the eigenfunctions of the Laplacian $e^{2 \pi i x \xi}$ are the harmonics used in the Fourier transform and translation relies on an underlying group structure. %For example, a window $g(x) \in L^2(\mathbb{R})$ uses the group $(\mathbb{R}, +)$ to systematically move across the domain as $g(t-x)$.  
For signals defined on graphs, the approach developed in \cite{shuman2016vertex} is based on a notion of a graph Fourier transform through the spectral analysis of a graph Laplacian, and has found success in graph signal processing and data science applications \cite{shuman2013emerging}. The graph heat kernel $e^{-tL}$ then is a natural window to consider in defining a graph short-time Fourier transform as it is the semigroup associated to $L$ \cite{bakry2014analysis}. Since the graph heat kernel is a fundamental solution to the heat equation on a graph, it can be viewed as encoding a notion of translation via a convolution-type operator and relating one vertex to another via a random walker. For example, for two vertices $(v_i, v_j)$ on a graph with underlying group structure such as the ring graph, $e^{-tL}(v_i, v_j) = k_t(v_i-v_j)$, where $k_t(\cdot) = \sum_{i = 1}^N e^{- \lambda_i t} \phi_{\lambda_i}(\cdot)$ and $(\lambda_i, \phi_{\lambda_i})$ are an eigenvalue and eigenvector of the graph Laplacian \cite{terras1999fourier}. These two graph operators, $L$ and $H_t$, are well-studied and fundamental tools in spectral graph theory, graph signal processing, and data science. Indeed, spectral graph theory is based on the study of the spectrum of various graph Laplacians which reveal a dazzling array of properties of the geometry of the graph and dynamic processes defined on the graph. This includes isoperimetric and Cheeger inequalities, mixing time of Markov chains, and minimizing energies of Hamiltonian systems \cite{chung1997spectral} \cite{hein2007graph}. In graph signal processing, a notion of graph Fourier transform is defined via a change-of-basis matrix consisting of eigenvectors of the graph Laplacian. This allows an efficient basis representation of signals defined on the graph that is tailored to the structure of the underlying graph. In data science and computer vision, the heat kernel is used to characterize the shape of a mesh describing an underlying manifold or a graph \cite{sun2009concise}. This is due in part to the heat kernel being isometric invariant. Furthermore, a plethora of clustering techniques, graph neural networks, and PageRanks are based on the spectrum of the graph Laplacian and graph heat kernels \cite{von2007tutorial} \cite{kipf2016semi}.

In this paper, we used the heat kernel to define the graph-short-time Fourier transform on finite graphs. As a consequence, we obtain a time-dependent family of finitely many signals whose spanning properties we investigate. We note that \cite{shuman2016vertex} was the first to propose this definition, but their main results were stated for general window functions. In particular, they prove the resulting Gabor-like system forms a frame for $\mathbb{R}^d$ \cite[Theorem 3]{shuman2016vertex}. By contrast, our paper zeros in on the heat kernel as the window of the graph-short-time-Fourier transform and obtains the full description of the spectra of the frame operators associated to these Gabor-like systems. 

Our main contributions can be summarized as follows. In Section~\ref{sec: gstft}
we introduce our definition of a  short-time Fourier transform on a general graph  \eqref{eq: gstft} using the graph Laplacian and graph heat kernel. Subsequently, an explicit form for the inverse transform, the associated frame operator, and its spectrum are provided in Theorems \ref{thm:frame}, \ref{thm: inv}, and \ref{thm:eig}. In Section~\ref{sec: gstft_tight} on two classes of algebraic graphs, we show in Theorems~\ref{thm:vert} and~\ref{thm: srg} that the associated Gabor frame is tight and independent of the window parameter $t \in \mathbb{R}_{t \geq 0}$.

\section{The STFT and Gabor Frames on $\mathbb{C}^N$} \label{sec: STFT}
Here, we recall key definitions and concepts from finite-dimensional Fourier analysis focusing on the discrete Fourier transform, the discrete STFT, and associated Gabor frames for $\mathbb{C}^N$. The corresponding operators defined on graphs in Section \ref{sec: gstft} will coincide with those defined here when the underlying graph is the Cayley graph of $\mathbb{Z}/N\mathbb{Z}$, the \textit{circle graph} or \textit{ring graph} on $N$ verices. This is due to the eigenvector structure of the graph Laplacian on the circle graph whose entries are powers of roots of unity \cite{terras1999fourier}. This graph short-time Fourier transform and associated graph Gabor frame can therefore  be viewed as generalizing the STFT and Gabor frames for $\mathbb{C}^N$.

Given a vector $f \in \mathbb{C}^N$, the \textit{discrete Fourier transform (DFT)} $\mathcal{F}: \mathbb{C}^N \rightarrow \mathbb{C}^N$ is defined pointwise as \begin{equation*}
    \mathcal{F}f(m) = \hat{f}(m) = \tfrac{1}{\sqrt{N}}\sum_{n = 0}^Nf(n)e^{- 2 \pi i n m/n}, \ m=0, \ldots, N-1
\end{equation*}
The DFT can also be expressed in matrix notation. Let the \textit{Fourier matrix} $W_N \in \mathbb{C}^{N \times N}$ be defined as
\begin{equation*}
    W_N = (\tfrac{1}{\sqrt{N}}\omega^{-rs})_{r,s=0}^{N-1}, \ \omega = e^{2 \pi i/N}.
\end{equation*}
Then, the DFT can be expressed simply as the matrix-vector product $\hat{f}=W_Nf$. The DFT shares fundamental properties with the classical Fourier transform defined on functions in $L^2(\mathbb{R}^d)$ such as an inversion (reconstruction) formula, Parseval-Plancherel, and the Poisson summation formula. These can be derived from the fact that the normalized harmonics $\frac{1}{\sqrt{N}}e^{2 \pi i n (\cdot)/N}$ for $\ n = 0, \ldots, N-1$ form an orthonormal basis for $\mathbb{C}^N$:
\begin{equation*}
    f = \frac{1}{\sqrt{N}}\sum_{n = 0}^N \hat{f}(n)e^{2 \pi i (\cdot)/N}, \ f \in \mathbb{C}^N.
\end{equation*}
The short-time Fourier transform localizes the Fourier transform and provides information not only on constituent frequencies that make up a signal or function but also when or where those frequencies occur. This is accomplished via the composition of a translation operator and a modulation operator acting on a window function. The \textit{cyclic shift operator} $T:\mathbb{C}^N \rightarrow \mathbb{C}^N$ is given by
\begin{equation*}
    Tf = T\big(f(0), f(1), \ldots, f(N-1)\big)^T = \big(f(N-1), f(0), f(1), \ldots, f(N-2)\big)^T.
\end{equation*}
Then, for $k \in \{0,1, \ldots, N-1\}$ the \textit{translation operator} $T_k$ is given by
\begin{equation*}
    T_kf(n) := T^kf(n) = f(n-k), \ n=0,1, \ldots, N-1.
\end{equation*}
The \textit{modulation operator} $M_l:\mathbb{C}^N \rightarrow \mathbb{C}^N$ for $l = 0, 1, \ldots, N-1$ is given by pointwise multiplication with harmonics $e^{2 \pi i l(\cdot)/N }$,
\begin{equation*}
    M_lf = \big( e^{2 \pi i l 0/N}f(0), e^{2 \pi i l 1/N}f(1), \ldots, e^{2 \pi i l (N-1)/N} f(N-1)\big)
\end{equation*}
Translation operators are also called time-shift operators. Likewise, modulation operators are called frequency-shift operators and much of time-frequency analysis is based on the interplay between these two operators which are related by
\begin{equation*}
    \mathcal{F}M_l = T_l\mathcal{F}.
\end{equation*}
The time-frequency operator $\pi(k,l):\mathbb{C}^N \rightarrow \mathbb{C}^N$ takes $f \mapsto \pi(k,l)f = M_lT_kf$. The \textit{discrete short-time Fourier transform (DSTFT)} $V_g:\mathbb{C}^N \rightarrow \mathbb{C}^{N \times N}$ with respect to a \textit{window} $g \in \mathbb{C}^N \backslash \{0\}$ is then given by
\begin{equation} \label{eq: DSTFT}
    V_gf(k,l) = \langle f, \pi(k,l)g \rangle = \sum_{n=0}^{N-1}f(n) \Bar{g}(n-k)e^{-2 \pi i l n/N}.
\end{equation}
The set $(g, \Gamma) = \{\pi(k,l)g\}_{(k,l)\in \Gamma}$ for $\Gamma = \{0,1,\ldots,N-1\}^2$ is called the \textit{full Gabor system} generated by the window $g$ and the set $\Gamma$. The DSTFT is then an inner product between $f$ and an element of a Gabor system, $V_gf(k,l) = \langle f, \pi(k,l)g \rangle$. A series of computations shows that the following reconstruction formula holds for each  $f \in \mathbb{C}^N$:
\begin{equation*}
    f(n) = \frac{1}{N ||g||^2}\sum_{k=0}^{N-1}\sum_{l=0}^{N-1}V_{g}f(k,l) g(n-k)e^{-2 \pi i l n /N}
\end{equation*}
Equivalently we see that   the Gabor system $(g, \Gamma)$ is a \textit{tight frame} \cite{casazza2012finite}. Formally, the collection of elements $\Psi = \{\psi_0, \ldots, \psi_{N-1}\}$ forms a \textit{frame} for $\mathbb{C}^N$ if there exist constants $0<A\leq B$ such that for all $f \in \mathbb{C}^N$, we have
\begin{equation} \label{eq: frame_cond}
    A ||f||^2 \leq \sum_{i=0}^{N-1} |\langle f, \psi_i \rangle|^2 \leq B ||f||^2
\end{equation}
%Equivalently, $\Psi$ is a spanning set for $\mathbb{C}^N$. 
The optimal such constants $A,B$ are referred to as the frame bounds for $\Psi$. Frames are usually analyzed through the study of their frame operators. To define these operators, we first let the \textit{analysis operator} $C:\mathbb{C}^N \rightarrow \ell^2(\mathbb{C}^N)$ take an element to its sequence of frame coefficients, $Cf = \{\langle f, \psi_i \rangle \}_{i=0}^{N-1}$. The \textit{frame operator} $S:\mathbb{C}^N \rightarrow \mathbb{C}^N$ is then defined as the composition $C^*C$, so that
\begin{equation*}
    Sf = \sum_{i=0}^{N-1} \langle f, \psi_i \rangle f.
\end{equation*}
The frame condition~\eqref{eq: frame_cond} can then be re-written in terms of the frame operator as
\begin{equation*}
    A ||f||^2 \leq \langle Sf, f\rangle \leq B ||f||^2.
\end{equation*}
An especially important class of frames are tight frames, where the frame bounds coincide $A=B$. When the frame is tight, we have a remarkably compact representation of an element $f \in \mathbb{C}^N$ which resembles that of an orthonormal basis,
\begin{equation*}
    f = \sum_{i=0}^{N-1} \langle f, \psi_i \rangle \psi_i
\end{equation*}
due to the frame $\{\frac{1}{\sqrt{A}}\psi_i\}_{i=0}^{N-1}$ being self-dual.
%with the \textit{canonical dual-frame} $\{S^{-1}\psi_i\}_{i=0}^{N-1}$.
This implies that after this re-scaling, tight frames satisfy \textit{Parseval-Plancharel}:
\begin{equation*}
    ||f||^2 = \sum_{i=0}^{N-1} |\langle f, \psi_i \rangle|^2
\end{equation*}
Tight frames also are numerically stable with an optimal condition number, can be viewed as an orthogonal projection of an orthonormal basis from a larger Hilbert space, and are required to minimize the frame potential, $\sum _{j,j' =0}^{N-1} |\langle  e_j, e_{j'} \rangle|^2$ \cite{han2014operator} \cite{fickus2015detailing} \cite{benedetto2003finite}. In Section~\ref{sec: gstft_tight} we consider when the underlying graph structure determined if graph Gabor frames defined below are indeed tight. For further work on the fundamentals of frame theory and tight frames see \cite{casazza2012finite} \cite{heil2010basis} \cite{grochenig2001foundations}.

Part of the utility of the DSTFT is it provides joint time-frequency information of a function $f \in \mathbb{C}^N$. If $f(t) \in \mathbb{R}^N$ is a sampled piecewise cosine containing multiple constituent frequencies, the DSTFT aids in determining when the frequency of $f(t)$ changes in addition to identifying the constituent frequencies; a task the ordinary Fourier transform cannot accomplish, see Figures \ref{fig:ft} \ref{fig:stft}.
\begin{figure}[h]
    \centering
    \begin{subfigure}[b]{0.47\textwidth}
         \centering
         \includegraphics[width=\textwidth]{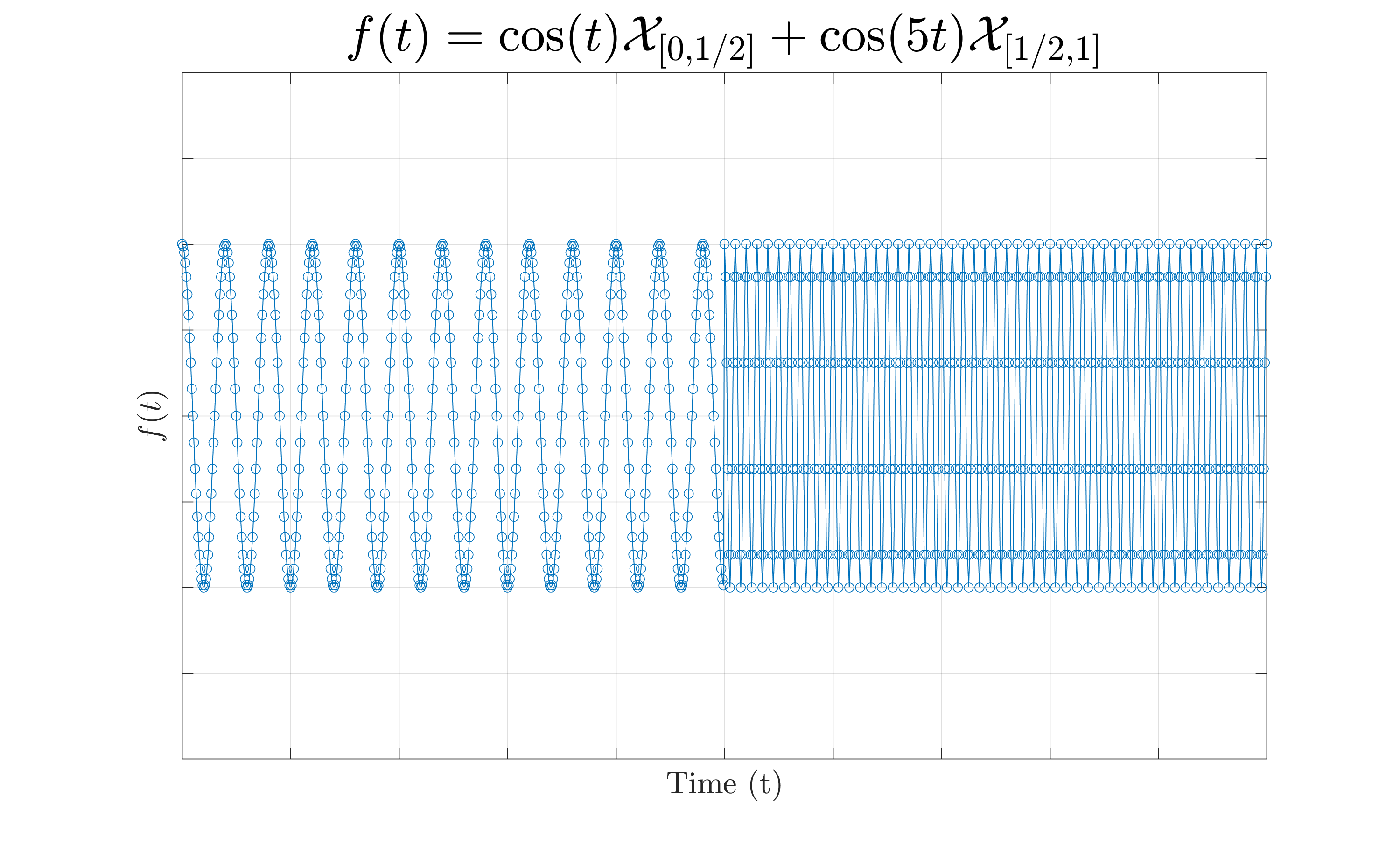}

         \caption{${f(n)= \cos(\alpha_1 n) \mathcal{X}_{[0,1/2]} + \cos(\alpha_2 n) \mathcal{X}_{[1/2,1]}}$}
         \label{fig:cos}
     \end{subfigure}
     \hspace{.5cm}
     \begin{subfigure}[b]{0.47\textwidth}
         \centering
         \includegraphics[width=\textwidth]{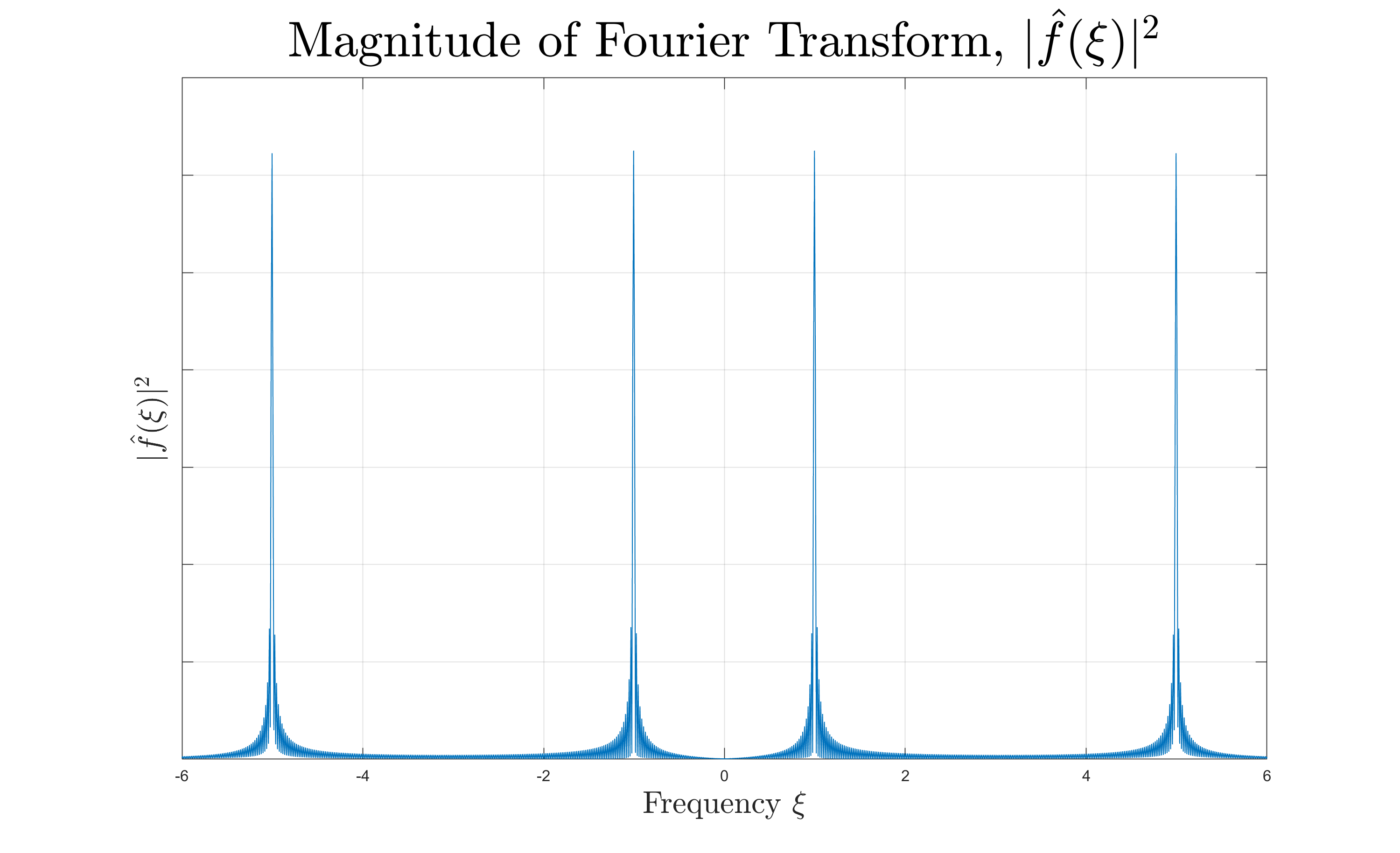}
         \caption{Magnitude of DFT $|\hat{f}(m)|^2$}
         \label{fig:ftcos}
     \end{subfigure}
     \caption{A piecewise cosine $f(n)$ and the magnitude of the DFT $|\hat{f}(m)|^2$. The magnitude $|\hat{f}(m)|^2$ localizes around the modulus of the constituent frequencies but provides no information on when the frequency change occurs.}
     \label{fig:ft}
\end{figure}
\begin{figure}[h]
    \centering
    \begin{subfigure}[b]{0.45\textwidth}
         \centering
         \includegraphics[width=\textwidth]{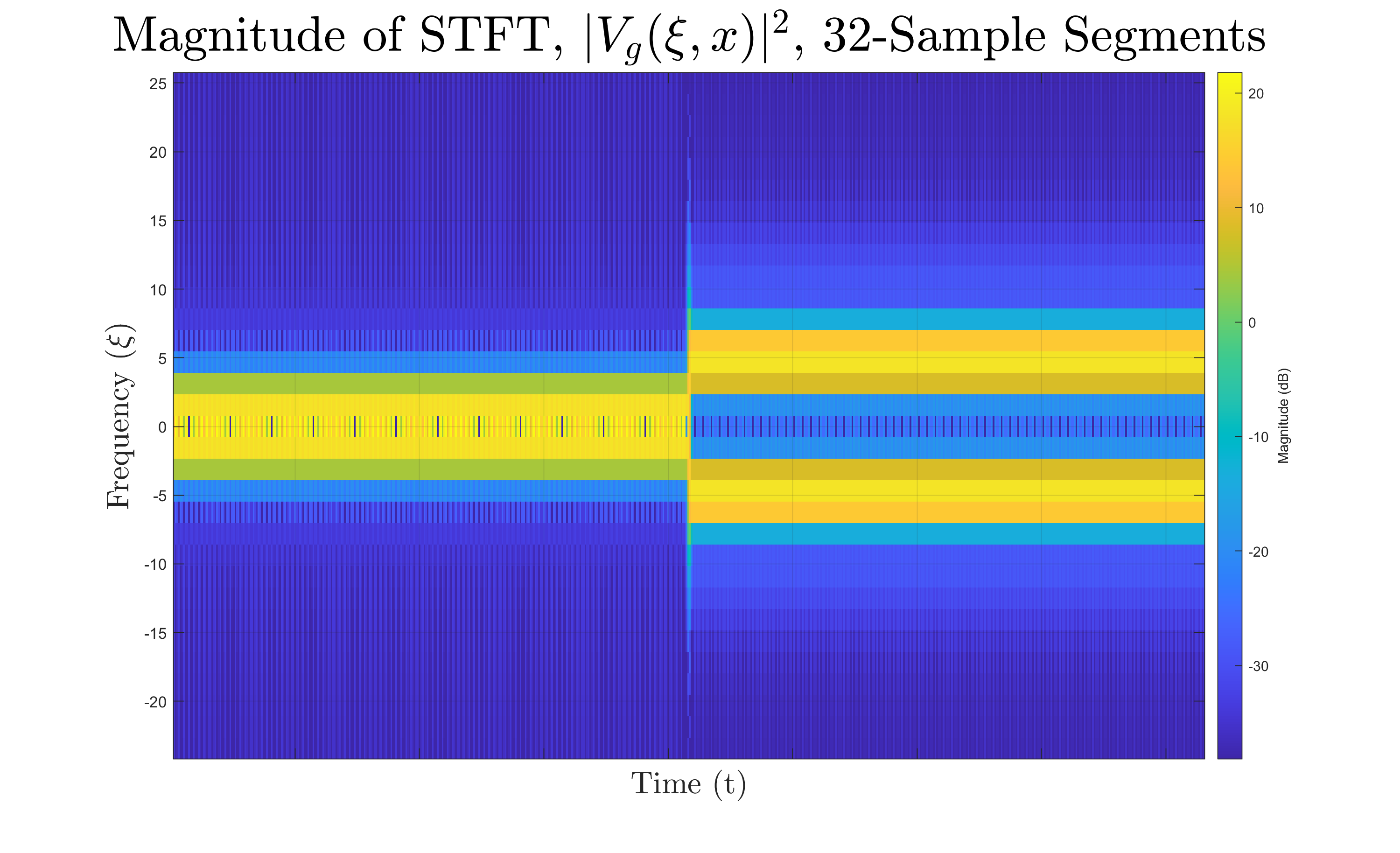}
         \caption{Spectrogram $|V_{g_1}f(k, l)|^2$}
         \label{fig:stftfcos1}
     \end{subfigure}
     \hspace{.5cm}
     \begin{subfigure}[b]{0.45\textwidth}
         \centering
         \includegraphics[width=\textwidth]{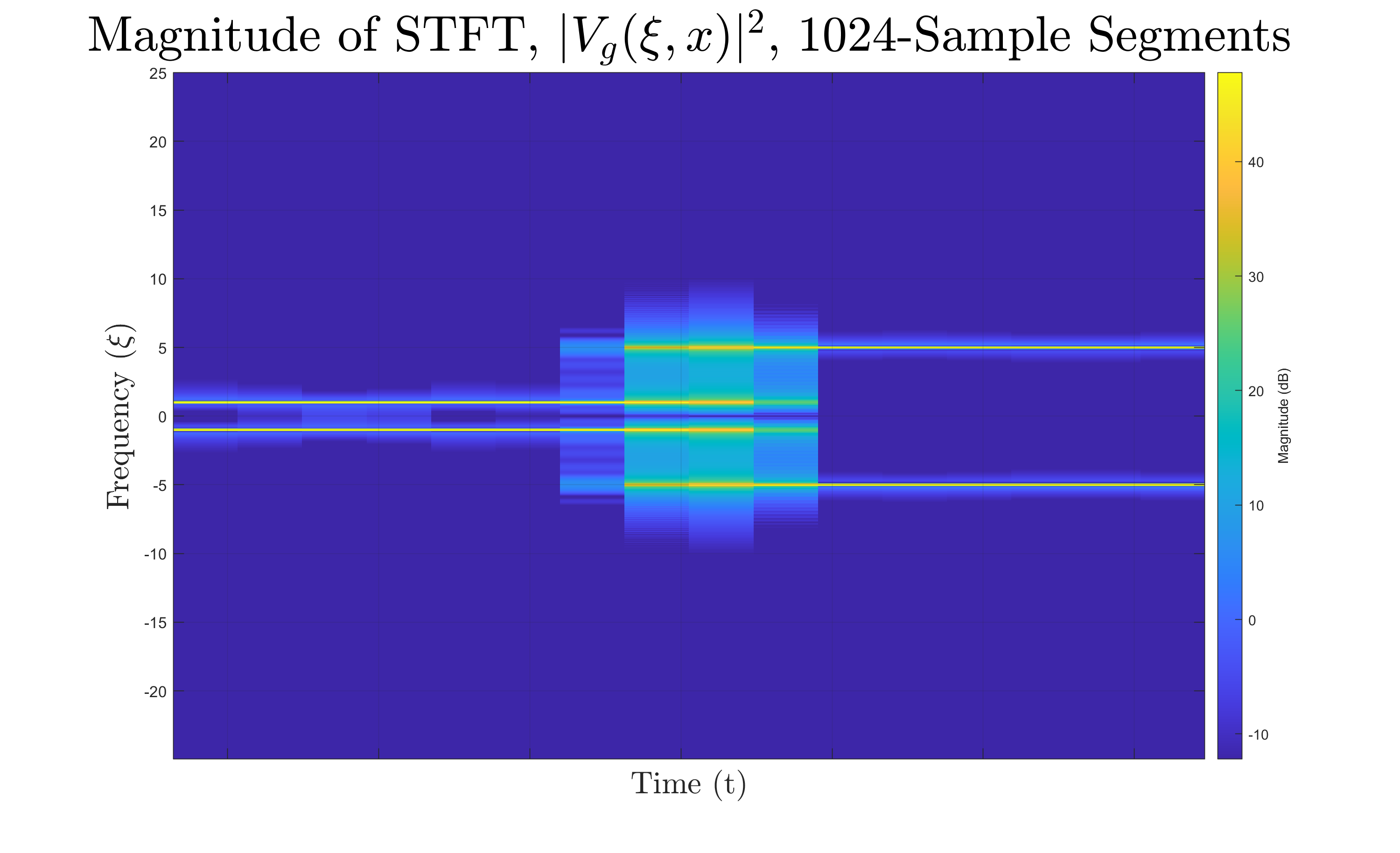}
         \caption{Spectrogram $|V_{g_2}f(k, l)|^2$}
         \label{fig:sftfcos2}
     \end{subfigure}
     \caption{Spectrograms visualize the magnitude of the DSTFT $|V_{g}f(k, l)|^2$ for two different sized windows. The windowing allows the DSTFT to determine when the frequency change occurs in $f(n)$ from Figure as well as the constituent frequencies \ref{fig:cos}.}
     \label{fig:stft}
\end{figure}

\section{A Short-time Fourier Transform and Gabor Frames on Graphs} \label{sec: gstft}

Let $V=\{v_1, \ldots, v_N\}$ and $E\subset \{(v_i, v_j): v_i, v_j \in V, i \not = j\}$ be the finite vertex and edge set of the graph $G=(V,E)$. We consider an undirected and connected graph with uniform edge weights equal to one. Let $\mathcal{C}(V)$ be the complex vector space of functions $f:V \rightarrow \mathbb{C}^N$ on the vertices of the graph. A standard measure to associate with $G$ is the counting measure. For a subset of vertices $V' \subset V$, the measure $\mu(V')$ is simply its cardinality:
\begin{equation*}
    \mu(V') = |V'|.
\end{equation*}
Given the measure $\mu$, the space $\mathcal{C}(V)$ inherits a Hilbert space structure with inner product
\begin{equation*}
        \langle f,g \rangle = \sum_{v \in V} f(v)\Bar{g}(v) \mu(v) = \sum_{v \in V} f(v)\Bar{g}(v).
\end{equation*}
The \textit{graph Laplacian} $L:\mathcal{C}(V) \rightarrow \mathbb{R}^N$ as an operator is defined as
\begin{equation*}
    Lf(v) = \sum_{u\sim v} \big(f(v)-f(u)\big).
\end{equation*}
In matrix notation, $L=D-A$, where $A\in \mathbb{R}^{N \times N}$ and $D\in \mathbb{R}^{N \times N}$ are the adjacency matrix and diagonal degree matrix associated with $G$. Recall, that the semigroup associated with the graph Laplacian is is the matrix exponential, called the \textit{graph heat kernel} ${H_t \in \mathbb{R}^{N \times N}}$:
\begin{equation}
    H_t \coloneqq e^{-tL} = \sum_{k=0}^ \infty \frac{(-t)^k}{k!} L^k.
\end{equation}
The graph Laplacian $L$ is also the \textit{generator} of the semigroup $e^{-tL}$ \cite{bakry2014analysis}. By the \textit{spectral theorem}, $L$ has orthonormal set of eigenvectors $\Phi=\{\phi_1, \phi_2, \ldots, \phi_N\}$ and real eigenvalues $\Lambda = \{\lambda_1, \lambda_2, ... \lambda_N\} \subset [0, \infty)$. Define a function $\Tilde{\phi}:(G,\Lambda) \rightarrow \mathbb{C}$ that takes $(v_i, \lambda_j)\mapsto \phi_{\lambda_j}(v_i)$. We will use the $\phi_{\lambda_j}(v_i)$ notation interchangeably with the $\Tilde{\phi}$. The \textbf{graph Fourier transform} $\hat{f}$ of a function $f:V \rightarrow \mathbb{C}^N$ on the vertices of $G$ is defined as the expansion of $f$ in terms of this orthonormal basis of eigenvectors,
\begin{equation*}
    \hat{f}(\lambda_l) \coloneqq \langle f, \phi_l \rangle = \sum_{i=1}^N f(v_i)\Bar{\phi}_{\lambda_l}(v_i).
\end{equation*}
This can also be expressed as the matrix multiplication $\hat{f}=\Phi^*f$, where now $\Phi\in \mathbb{C}^{N\times N}$ is the matrix with columns $\phi_1, \phi_2, \ldots, \phi_N$. Equivalently, $\Phi_{ij}$ are the function values $\phi_j(v_i)$. We therefore have the inversion formula and function representation
\begin{equation*}
    f(v_i) = \sum_{l=1}^N \hat{f}(\lambda_l) \phi_{\lambda_l}(v_i) = \Phi \hat{f}
\end{equation*}
This graph Fourier transform shares a number of key properties with the classical Fourier transform. It can be shown that the graph Fourier transform obeys the \textit{Parseval-Plancharel Identity} $\langle f, g \rangle = \langle \hat{f}, \hat{g} \rangle$ simply due to $\Phi$ being unitary:
\begin{equation*}
%\begin{split}
    \langle \hat{f}, \hat{g} \rangle  = \langle \Phi^*f, \Phi^*g \rangle = \langle \Phi\Phi^*f, g \rangle = \langle f, g \rangle
%\end{split}
\end{equation*}
Furthermore, the eigenvalues correspond to \textit{graph energies} in the sense that the quadratic form \begin{equation*}
    \bar{\phi}_{\lambda} L \phi_{\lambda} = \sum_{(u,v) \in E} (\phi_{\lambda}(u) - \phi_{\lambda}(v))^2
\end{equation*} is small when there is small variation of the eigenvector $\phi$ across the graph. When the underlying graph is sufficiently regular, the entries of the eigenvectors are roots of unity or samples of a cosine \cite{shuman2013emerging}. In fact, when the graph is a ring graph the graph Fourier transform is the \textit{Discrete Fourier Transform (DFT)} used in signal processing \cite{casazza2012finite}.\\

For the graph short-time Fourier transform, We set the window to be the columns of the graph heat kernel $h_t(v_i) \coloneqq H_t(\cdot, v_i)$. Each entry of the window $h_t(v_i) \geq 0$ is tunably localized around the vertex $v_i$, and we have underlying geometry of the graph encoded in $H_t$ via decay estimates on the trace in large time or recovering geodisics (shortest-path distances) in short time. Because the graph Laplacian is diagonalizable, $H_t$ has the compact form $H_t = \Phi e^{-t \Lambda} \Phi^*$, which reads entry-wise as
\begin{equation*}
    H_t(v_i, v_j) = \sum_{l=1}^N e^{-\lambda_l t} \phi_{\lambda_l}(v_i) \Bar{\phi}_{\lambda_l}(v_j).
\end{equation*}
It is not difficult show that the entries of $H_t$ are positive for $t>0$ and compute an explicit form for the norms of its columns. We show in Theorem \ref{thm:eig} that the spectrum of the frame operator corresponds to these column norms. Define the \textbf{graph short-time Fourier transform (GSTFT)} $V_t: \mathbb{C}^N \rightarrow \mathbb{C}^{N \times N}$ of a function $f:V \rightarrow \mathbb{C}^N$ on the vertices of the graph $G$:
\begin{equation} \label{eq: gstft}
    (V_t f)(v_i, \lambda_j) \coloneqq \sum_{k=1}^N f(v_k)H_t(v_i, v_k) \Bar{\phi}_{\lambda_j}(v_k).
\end{equation}
This definition can be viewed as an extension of a discrete short-time Fourier transform on $\mathbb{C}^N$ in the same sense that the graph Fourier transform extends the discrete Fourier transform \cite{casazza2012finite}. It is a representation of a function on the the graph with entries indexed by vertex $v_i$ and eigenvalue $\lambda_j$ for $i,j \in \{1,2, \ldots, N\}$. We can get a simpler representation of this GSTFT in terms of an inner product by defining an associated graph Gabor system. Let $D_i(t) \in \mathbb{R}^{N \times N}$ be the diagonal matrix containing the $i$-th column of the graph heat kernel matrix:
\begin{equation*}
    D_i(t) = \text{diag}(H_t(\cdot, v_i)).
\end{equation*}
Then, the \textit{graph Gabor system} associated with the GSTFT is
\begin{equation*}
    \{\psi_{ij}(t)\}_{i,j=1}^N \coloneqq \{D_i(t)\phi_{\lambda_{j}}\}_{i,j=1}^N.
\end{equation*}
Thus, we can write each entry of the GSTFT $V_t f(v_i, \lambda_j)$ as the inner product
\begin{equation*}
   (V_t f)(v_i, \lambda_j) = \langle f, \psi_{ij}(t) \rangle.
\end{equation*}
With the classical short-time Fourier transform and associated Gabor system, the function can be reconstructed from its sampling on the time-frequency lattice when the Gabor system forms a frame. To show this graph Gabor system forms a frame for $\mathbb{C}^N$, we introduce the associated operators. The \textit{analysis operator} $A(t) \in \mathbb{C}^{N^2 \times N}$ is the matrix with rows that are the conjugate transpose graph Gabor atoms $\{\bar{\psi}(t)_{ij}\}_{i,j=1}^N$. The corresponding \textit{frame operator} $S(t) \in \mathbb{C}^{N \times N}$ is the composition $S(t) \coloneqq A(t)^*A(t)$ and the \textit{Grammian} $G(t) \in \mathbb{C}^{N^2 \times N^2}$ is the composition $G(t) \coloneqq A(t) A(t)^*$. We now show that the frame operator $S(t)$ is a diagonal matrix, explicitly compute its form, and show the set of graph Gabor system always forms a frame.

\begin{theorem} \label{thm:frame} The set of graph Gabor atoms $\{\psi_{ij}(t)\}_{i,j=1}^N$ forms a frame for $\mathbb{C}^N$ for all $t \geq 0$. Furthermore, the frame operator $S(t)$ is a diagonal matrix and has the following form:
    \begin{equation*}
        S(t) = \sum_{i=1}^N D_i^2(t)
    \end{equation*}
\end{theorem}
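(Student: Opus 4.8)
The plan is to compute the frame operator directly from its definition $S(t)f = \sum_{i,j=1}^N \langle f, \psi_{ij}(t)\rangle\,\psi_{ij}(t)$ and to exploit two structural facts: each $D_i(t)$ is a real diagonal, hence self-adjoint, matrix, and the eigenvectors $\{\phi_{\lambda_j}\}_{j=1}^N$ form an orthonormal basis of $\mathbb{C}^N$. These two facts are exactly what let the apparent double sum over $(i,j)$ collapse to a single sum of squares of diagonal matrices.

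First I would substitute $\psi_{ij}(t)=D_i(t)\phi_{\lambda_j}$ and move the diagonal matrix across the inner product. Because $H_t$ has real entries, $D_i(t)=\mathrm{diag}(H_t(\cdot,v_i))$ is real and diagonal, so $D_i(t)^\ast=D_i(t)$ and $\langle f, D_i(t)\phi_{\lambda_j}\rangle=\langle D_i(t)f,\phi_{\lambda_j}\rangle$. Writing $g_i \coloneqq D_i(t)f$, this gives $S(t)f=\sum_{i=1}^N D_i(t)\big(\sum_{j=1}^N \langle g_i,\phi_{\lambda_j}\rangle\,\phi_{\lambda_j}\big)$. The completeness relation for the orthonormal basis then collapses the inner sum, $\sum_{j=1}^N \langle g_i,\phi_{\lambda_j}\rangle\,\phi_{\lambda_j}=g_i=D_i(t)f$, so each $i$-term contributes $D_i(t)^2 f$. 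Summing over $i$ yields $S(t)=\sum_{i=1}^N D_i^2(t)$. Since every $D_i(t)$ is diagonal, so is each square and their sum, which establishes the diagonality claim and the explicit formula simultaneously.

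It then remains to verify the frame property, namely that $S(t)$ is positive definite for every $t\geq 0$. Because $S(t)$ is diagonal, this reduces to showing each diagonal entry is strictly positive; the $(v,v)$ entry equals $\sum_{i=1}^N H_t(v,v_i)^2$, which is the squared Euclidean norm of the $v$-th row of $H_t$. I would argue that this never vanishes from the fact that $H_t=e^{-tL}$ is invertible, indeed symmetric positive definite, for all $t\geq 0$, so no row can be identically zero; the boundary case $t=0$ is handled by noting $H_0=I$, whence $S(0)=I$. Setting $A=\min_v S(t)_{vv}$ and $B=\max_v S(t)_{vv}$ then furnishes strictly positive frame bounds $0<A\leq B$, so $A\|f\|^2\leq \langle S(t)f,f\rangle\leq B\|f\|^2$ as required by \eqref{eq: frame_cond}.

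No step here is a deep obstacle, but the care required is concentrated in two places. The first is the conjugation bookkeeping in transferring $D_i(t)$ across the inner product, where one must use that the heat kernel entries are real to identify $D_i(t)^\ast$ with $D_i(t)$. The second, and the more genuinely necessary point, is securing a strictly positive lower frame bound \emph{uniformly} in $t$ on the closed ray $t\in[0,\infty)$, which hinges on the non-vanishing of the rows of $H_t$; I expect the endpoint $t=0$ to require the separate observation $H_0=I$ rather than the positivity-of-entries statement used for $t>0$.
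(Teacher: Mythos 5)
Your derivation of the formula $S(t)=\sum_{i=1}^N D_i^2(t)$ is correct and is essentially the paper's computation in different clothing: where you move the self-adjoint diagonal matrix across the inner product and invoke completeness of $\{\phi_{\lambda_j}\}$, the paper writes $S(t)$ as a sum of outer products and collapses $\sum_j \phi_{\lambda_j}\bar{\phi}_{\lambda_j}=I$; these are the same resolution-of-identity step. Where you genuinely diverge is in establishing the frame property. The paper fixes a single index $i$ and argues that the sub-collection $\{D_i(t)\phi_{\lambda_j}\}_{j=1}^N$ already spans $\mathbb{C}^N$ because $D_i(t)\Phi$ is a full-rank diagonal matrix times a unitary; this requires every entry of the $i$-th column of $H_t$ to be nonzero, which holds for $t>0$ (positivity of heat kernel entries on a connected graph) but fails at $t=0$, where $D_i(0)=\mathrm{diag}(e_i)$ has rank one. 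You instead read off positive definiteness directly from the explicit diagonal form of $S(t)$, identifying the $(v,v)$ entry with the squared norm of the $v$-th row of $H_t$ and using invertibility of $e^{-tL}$; this argument is uniform in $t\in[0,\infty)$ and in particular covers the endpoint $t=0$ (where $S(0)=I$) that the paper's fixed-$i$ spanning argument does not literally handle. One small remark: your invertibility argument already applies at $t=0$, so the separate observation $H_0=I$ you flag as necessary is a harmless redundancy rather than a required extra step.
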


\begin{proof} Fix $i \in \{1,2, \ldots, N\}$. Then consider the $N \times N$ matrix $D_i(t)\Phi$. This matrix has $N$ columns that are a subset of the $N^2$ set of Gabor atoms. This matrix is the composition of a full-rank diagonal matrix and a unitary matrix and hence spans $\mathbb{C}^N$. Now, a column of $A_t^*$ is $D_i(t)\phi_{\lambda_j}$ and a row of $A_t$ is $\bar{\phi}_{\lambda_j}D_i(t)$ for $i,j \in \{1, 2, \ldots N\}$. We write the frame operator as a sum of $N^2$ outer products:
    \begin{align*}
        %\begin{split}
            S(t) &= \sum_{i=1}^N \sum_{j=1}^N D_i(t)\phi_{\lambda_j} \bar{\phi}_{\lambda_j} D_i(t)  = \sum_{i=1}^N D_i(t) \big[\sum_{j=1}^N \phi_{\lambda_j} \bar{\phi}_{\lambda_j} \big] D_i(t) \\
            &= \sum_{i=1}^N D_i(t) I_{N \times N} D_i(t) = \sum_{i=1}^N D_i^2(t)
        %\end{split}
    \end{align*}
\end{proof}
We can leverage this explicit for of the frame operator to explicitly characterize its spectrum and describe when the frame formed by the graph Gabor atoms are a tight frame.
\begin{theorem} \label{thm:eig} The eigenvalues $\gamma_j(t)$ for $j \in \{1, \ldots, N\}$ of the frame operator $S(t)$ are the norm-squared of the corresponding columns of the graph heat kernel, 
\begin{equation*}
    \gamma_j(t) = ||H_t(\cdot, v_j)||^2 = \sum_{l=1}^{N} e^{-2\lambda_lt}|\phi_l(v_j)|^2 
\end{equation*}
In particular, the eigenvalues are positive and the frame for $\mathbb{C}^N$ formed by the graph Gabor system $\{\psi_{ij}(t)\}_{i,j=1}^N$ is tight for times $t\geq 0$ when the columns of the graph heat kernel all have the same norm.
\end{theorem}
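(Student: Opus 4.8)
The plan is to read the spectrum directly off the diagonal form of $S(t)$ supplied by Theorem~\ref{thm:frame}. Since $S(t) = \sum_{i=1}^N D_i^2(t)$ and each $D_i(t) = \mathrm{diag}(H_t(\cdot, v_i))$ is diagonal, the operator $S(t)$ is itself diagonal; hence its eigenvalues are exactly its diagonal entries, and no further diagonalization is needed. The $j$-th diagonal entry of $D_i(t)$ is the $j$-th coordinate of the $i$-th column of the heat kernel, namely $H_t(v_j, v_i)$, so the $(j,j)$ entry of $S(t)$ is $\sum_{i=1}^N H_t(v_j, v_i)^2$.

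First I would identify this diagonal entry with the claimed column norm. Because $L = D - A$ is real and symmetric, so is $H_t = e^{-tL}$; thus $H_t(v_j, v_i) = H_t(v_i, v_j)$ and
\begin{equation*}
\gamma_j(t) = \sum_{i=1}^N H_t(v_j, v_i)^2 = \sum_{i=1}^N H_t(v_i, v_j)^2 = \|H_t(\cdot, v_j)\|^2 .
\end{equation*}
To obtain the spectral form, I would substitute the eigen-expansion $H_t(\cdot, v_j) = \sum_{l=1}^N e^{-\lambda_l t}\bar{\phi}_{\lambda_l}(v_j)\,\phi_{\lambda_l}$ into the norm-squared and expand; orthonormality of the columns of $\Phi$ collapses the resulting double sum to its diagonal, leaving $\gamma_j(t) = \sum_{l=1}^N e^{-2\lambda_l t}|\phi_{\lambda_l}(v_j)|^2$, which is the asserted formula.

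For the final assertions, positivity follows because the rows of the unitary matrix $\Phi$ are unit vectors, giving $\sum_{l=1}^N |\phi_{\lambda_l}(v_j)|^2 = 1$; since at least one summand is therefore nonzero and each $e^{-2\lambda_l t} > 0$, we get $\gamma_j(t) > 0$ for every $t \geq 0$. Tightness is then immediate from the diagonal structure: a frame is tight precisely when its frame operator is a scalar multiple of the identity, and the diagonal matrix $S(t)$ equals $\gamma\,I_{N\times N}$ for some $\gamma$ exactly when all the $\gamma_j(t)$ coincide, that is, when every column of $H_t$ has the same norm. I do not anticipate a genuine obstacle here; the only point requiring a moment's care is the symmetry step identifying the $j$-th row norm of $H_t$ (which is what $S(t)$ naturally produces on its diagonal) with the $j$-th column norm appearing in the statement, and this is settled by the self-adjointness of $L$.
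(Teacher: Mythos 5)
Your proposal is correct and follows essentially the same route as the paper: read the eigenvalues off the diagonal of $S(t)=\sum_i D_i^2(t)$, expand the heat kernel in the eigenbasis and use orthonormality to collapse the double sum, and deduce positivity and the tightness criterion from the unitarity of $\Phi$. Your explicit appeal to the symmetry of $H_t$ to pass from the row norm $\sum_i H_t(v_j,v_i)^2$ to the column norm is a small point the paper glosses over, but it changes nothing of substance.
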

\begin{proof}
By Theorem \ref{thm:frame}, we know that the frame operator is a diagonal matrix and thus has eigenvalues given by the diagonal entries with eigenvectors as the standard basis vectors. By direct computation, these eigenvalues $\gamma_j(t)$ for $j \in \{1, \ldots, N\}$ have the form
\begin{equation*}
    \gamma_j(t) = S_{jj}(t) = \sum_{i=1}^N H_t(v_i, v_j)^2 = ||H_t(\cdot, v_j)||^2.
\end{equation*} 
The frame $\{\psi_{ij}(t)\}_{i,j=1}^N$ is tight when the eigenvalues of $S(t)$ are equal; that is, for times $t \in \mathbb{R}_{\geq 0}$ when $||H_t(\cdot, v_i)||^2=||H_t(\cdot, v_j)||^2$ for all $i,j \in \{1, 2, \ldots N\}$. To obtain an explicit formula for these eigenvalues, we compute:
\begin{equation*}
    \begin{split}
        \langle H_t(\cdot, v_i), H_t(\cdot, v_i) \rangle  &= \sum_{k=1}^N H_t(v_k, v_i)\Bar{H}_t(v_k, v_i) \\
        &= \sum_{k=1}^N \Big[ \sum_{l=1}^N e^{-\lambda_l t} \phi_{\lambda_l}(v_k) \Bar{\phi}_{\lambda_l}(v_i) \Big] \Big[ \sum_{l'=1}^N e^{-\lambda_{l'} t} \Bar{\phi}_{\lambda_{l'}}(v_k) \phi_{\lambda_{l'}}(v_i) \Big] \\
        &= \sum_{k=1}^N \Big[ \sum_{l=1}^N \sum_{l'=1}^N e^{-(\lambda_l+\lambda_{l'})t} \phi_{\lambda_l}(v_k) \Bar{\phi}_{\lambda_{l'}}(v_k) \phi_{\lambda_{l'}}(v_i) \Bar{\phi}_{\lambda_l}(v_i) \Big] \\
        &= \sum_{l=1}^N \sum_{l'=1}^N e^{-(\lambda_l + \lambda_{l'})t} \phi_{\lambda_{l'}}(v_i) \Bar{\phi}_{\lambda_l}(v_i) \Big[ \sum_{k=1}^N \phi_{\lambda_l}(v_k) \Bar{\phi}_{\lambda_{l'}}(v_k) \Big] \\
        &= \sum_{l=1}^N \sum_{l'=1}^N e^{-(\lambda_l + \lambda_{l'})t} \phi_{\lambda_{l'}}(v_i) \Bar{\phi}_{\lambda_l}(v_i) \delta_{ll'} \\
        &= \sum_{l=1}^N e^{-2\lambda_l t}|\phi_{\lambda_l}(v_i)|^2
    \end{split}
\end{equation*}
If $|\phi_{\lambda_l}(v_i)|^2 = 0, \text{ } \forall l \in \{1, 2, \ldots, N\}$, then $\{\phi_{\lambda_l}\}_{l=1}^N$ would not be a basis for $\mathbb{C}^N$. Therefore, $|\phi_{\lambda_l}(v_i)|^2 > 0$ for some $ l \in \{1, 2, \ldots, N\}$ and so $\sum_{l=1}^N e^{-2\lambda_l t}|\phi_{\lambda_l}(v_i)|^2 > 0$.

\end{proof}

In the limit as $t \rightarrow \infty$, the graph Gabor frame evolves towards a tight frame at a rate which is dominated by size of second smallest eigenvalue or \textit{Fiedler value} $\lambda_2$ of the graph Laplacian. This is due to the norm-squared of the columns of the graph heat kernel decaying at a rate that depends on the quantity $e^{-2 \lambda_l t}$ for $l=1, \ldots, N$, as seen in the above theorem. Since the graphs we are considering are connected, $\lambda_0=0$ with multiplicity $1$ and thus $\lim_{t \rightarrow \infty} ||H_t(\cdot, v_i)||^2 = 1/\sqrt{N}$, for all $v_i \in V$. In Figure \ref{fig: eig_diff}, a collection of random regular graphs are generated using the \textit{pairing model} (see \cite{bollobas1998random} and \cite{mckay1990uniform}) and the eigenvalue gap $|\gamma_{max} - \gamma_{min}|$ of the frame operator $S(t)$ is plotted as a function of $t$. We observe that the graph Gabor frame becomes tight at a faster rate for graphs with a larger Fiedler value.

\begin{figure}[htbp]
    \centering
    \includegraphics[scale=.16]{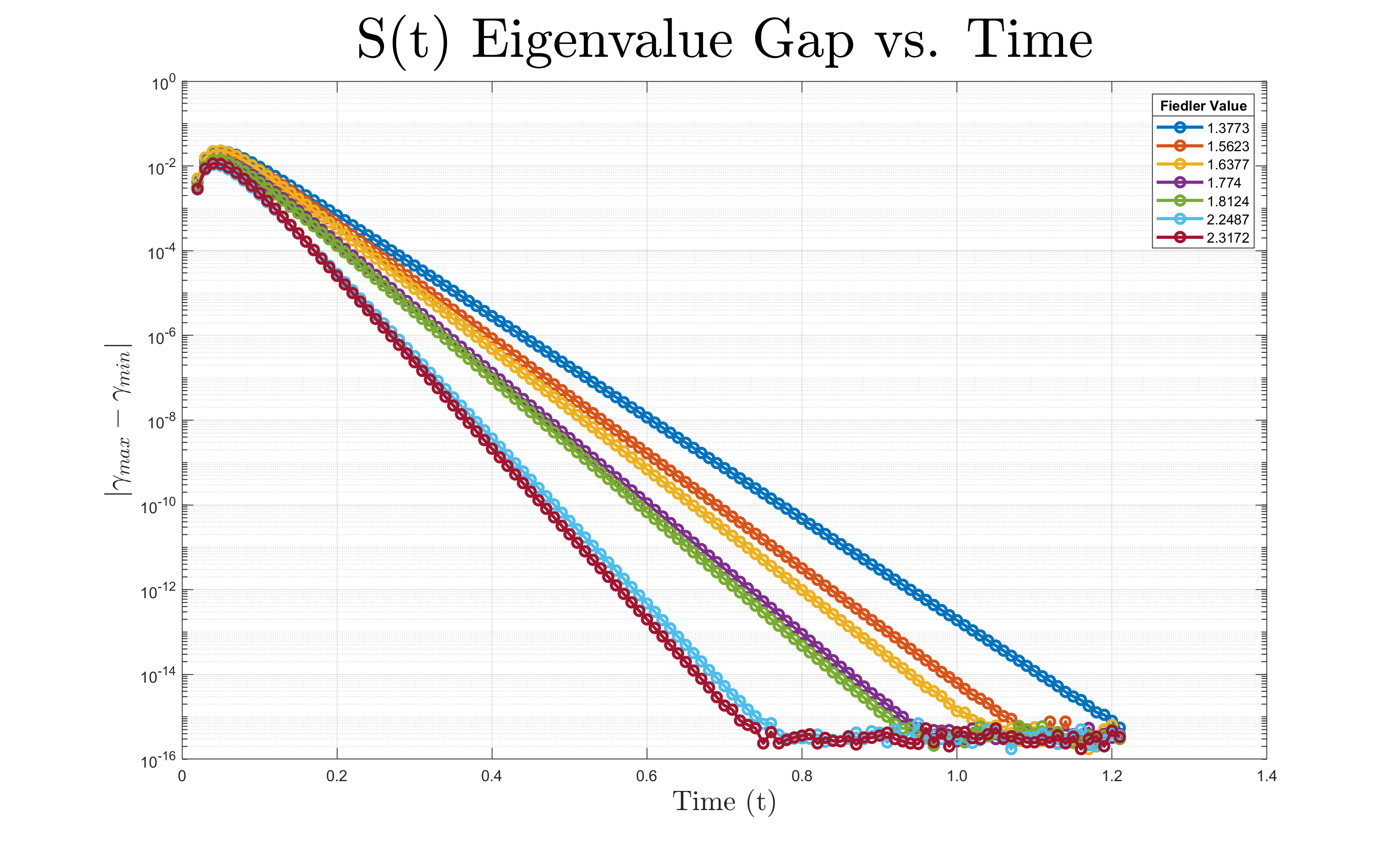}
    \caption{Difference between the maximum and minimum eigenvalue of the graph Gabor frame operator $S(t)$ as a function of time. Regular graphs with larger Fiedler values see faster decay towards a tight frame.}
    \label{fig: eig_diff}
\end{figure}

Like reconstruction in the case for $\mathbb{C}^N$, since the graph Gabor system forms a frame, we can reconstruct a function on the graph from its GSTFT.
\begin{theorem} \label{thm: inv}
     The operator $W_t: \mathbb{C}^{N \times N} \rightarrow \mathbb{C}^N$ defined as \begin{equation*}
    (W_tF)(v_i) = \frac{1}{||H_t(\cdot, v_i)||^2}\sum_{j=1}^N \phi_j(v_i) \big[ \sum_{k=1}^N F(v_k, \lambda_j) H_t(v_k, v_i) \big]
    \end{equation*} is a left inverse for the GSTFT $V_t$.
\end{theorem}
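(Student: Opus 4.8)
The plan is to verify directly that $W_t V_t = \mathrm{Id}_{\mathbb{C}^N}$ through a pointwise computation. First I would fix an arbitrary $f \in \mathbb{C}^N$ and a vertex $v_i$, set $F = V_t f$, and substitute the defining expression~\eqref{eq: gstft} for the inner entries $F(v_k, \lambda_j) = (V_t f)(v_k, \lambda_j) = \sum_{m=1}^N f(v_m) H_t(v_k, v_m) \bar{\phi}_{\lambda_j}(v_m)$ into the formula for $W_t$. Because every sum in sight is finite, I can combine everything into a single triple sum over the eigenvector index $j$, the summation index $k$ coming from the heat-kernel column, and the index $m$ coming from the GSTFT:
\begin{equation*}
    (W_t V_t f)(v_i) = \frac{1}{||H_t(\cdot, v_i)||^2} \sum_{j=1}^N \sum_{k=1}^N \sum_{m=1}^N \phi_j(v_i)\, H_t(v_k, v_i)\, f(v_m)\, H_t(v_k, v_m)\, \bar{\phi}_{\lambda_j}(v_m).
\end{equation*}

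Next I would isolate the sum over the eigenvector index $j$, since it is the only place both $\phi_j(v_i)$ and $\bar{\phi}_{\lambda_j}(v_m)$ appear. The key step is the completeness relation for the orthonormal eigenbasis: because $\Phi$ is unitary we have $\Phi\Phi^* = I$, which reads entrywise as $\sum_{j=1}^N \phi_j(v_i)\bar{\phi}_{\lambda_j}(v_m) = \delta_{im}$. It is worth flagging that this is the column-completeness identity $\Phi\Phi^* = I$, as opposed to the row-orthonormality $\Phi^*\Phi = I$ already used in defining the graph Fourier transform and in the proof of Theorem~\ref{thm:eig}. Applying it collapses the $m$-sum to the single surviving term $m = i$, leaving $f(v_i)$ multiplied by $\sum_{k=1}^N H_t(v_k, v_i)^2 = ||H_t(\cdot, v_i)||^2$. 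This factor cancels exactly against the normalizing prefactor, so $(W_t V_t f)(v_i) = f(v_i)$. Since $v_i$ and $f$ were arbitrary, $W_t$ is a left inverse for $V_t$.

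I expect the only genuine care to be the bookkeeping of the triple summation and selecting the correct completeness relation; the argument is otherwise a direct consequence of the unitarity of $\Phi$ and the column-norm formula established in Theorem~\ref{thm:eig}. Two remarks guard against pitfalls. First, the positivity $||H_t(\cdot, v_i)||^2 = \sum_{l=1}^N e^{-2\lambda_l t}|\phi_l(v_i)|^2 > 0$ from Theorem~\ref{thm:eig} is precisely what makes the normalization, and hence $W_t$ itself, well defined for every $t \geq 0$. Second, one might anticipate invoking the semigroup identity $H_t^2 = H_{2t}$ when the inner $k$-sum pairs two heat-kernel columns $H_t(v_k, v_i)$ and $H_t(v_k, v_m)$; however, because the $\delta_{im}$ collapse forces $m = i$, both columns become $H_t(\cdot, v_i)$ and the pairing reduces to a column norm, so the semigroup property is never actually needed.
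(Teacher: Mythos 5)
Your proposal is correct and follows essentially the same route as the paper's proof: substitute the definition of $V_t f$ into $W_t$, interchange the finite sums, collapse the eigenvector sum via the completeness relation $\sum_{j}\phi_j(v_i)\bar{\phi}_j(v_m)=\delta_{im}$, and cancel the resulting column norm $\sum_k H_t(v_k,v_i)^2 = \|H_t(\cdot,v_i)\|^2$ against the normalizing prefactor. Your added remarks on the positivity of the normalization and the non-necessity of the semigroup identity are accurate but not needed beyond what the paper does.
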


\begin{proof} By direct computation:
\begin{equation*}
    \begin{split}
        (W_t V_tf)(v_i) &= \frac{1}{||H_t(\cdot, v_i)||^2}\sum_{j=1}^N \phi_j(v_i) \big[ \sum_{k=1}^N V_tf(v_k, \lambda_j) H_t(v_k, v_i) \big] \\
        &= \frac{1}{||H_t(\cdot, v_i)||^2}\sum_{j=1}^N \phi_j(v_i) \big[ \sum_{k=1}^N [\sum_{l=1}^N f(v_l) \bar{\phi}_j(v_l) H_t(v_l, v_k)] H_t(v_k, v_i) \big] \\
        &= \frac{1}{||H_t(\cdot, v_i)||^2}\sum_{j=1}^N \sum_{k=1}^N \sum_{l=1}^N f(v_l) \phi_j(v_i) \bar{\phi}_j(v_l) H_t(v_l, v_k) H_t(v_k, v_i) \\
        &= \frac{1}{||H_t(\cdot, v_i)||^2}\sum_{k=1}^N \sum_{l=1}^N f(v_l) H_t(v_l, v_k) H_t(v_k, v_i) 
        \underbrace{[\sum_{j=1}^N \phi_j(v_i) \bar{\phi}_j(v_l)]}_{\delta_{il}} \\
        &= \frac{1}{||H_t(\cdot, v_i)||^2}\sum_{k=1}^N f(v_i) H_t(v_i, v_k) H_t(v_i, v_k) \\
        &= f(v_i) \frac{1}{||H_t(\cdot, v_i)||^2} \underbrace{\sum_{k=1}^N H_t(v_i, v_k)^2}_{||H_t(\cdot, v_i)||^2} \\
        &= f(v_i)
    \end{split}
\end{equation*}
\end{proof}

Note that this definition based on the interplay between the graph Laplacian $L$ and the graph heat kernel $H_t$ is remarkably tied to a notion of graph short-time Fourier transforms defined via a \textit{graph modulation} and \textit{graph translation} operator in \textit{Vertex-Frequency Analysis on Graphs} by Shuman et al. \cite{shuman2016vertex}. Recall that translation of a function on the graph $T_{i}f(v_k)$ from vertex $v_k$ to vertex $v_{i}$ for $i, k \in \{1, \ldots, N\}$ was a kernelized operation (a convolution) defined in the graph Fourier domain as
\begin{equation*}
    (T_{i}f)(v_k) \coloneqq \sqrt{N}(f*\delta_{i})(v_k) = \sqrt{N}\sum_{j=1}^{N} \hat{f}(\lambda_j)\bar{\phi}_{\lambda_j}(v_{i}) \phi_{\lambda_j}(v_k).
\end{equation*}
Further, modulation $(M_jf)(v_i)$ for $j \in \{1, \ldots, N\}$ was defined via pointwise multiplication with a eigenvector $\phi_j$,
\begin{equation*}
    (M_jf)(v_k) \coloneqq \sqrt{N} f(v_k) \phi_{\lambda_j}(v_k).
\end{equation*}
Their \textit{windowed Fourier transform} is then defined via the composition of these operators as in the classical case of the STFT. For a window function $g\in \mathbb{R}^N$, they define a \textit{windowed graph Fourier atom} by:
\begin{equation*}
    g_{ij}(v_k) \coloneqq (M_jT_ig)(v_k) = \underbrace{\sqrt{N} \phi_{\lambda_j}(v_k)}_{\text{Modulation}} \underbrace{\sqrt{N}\sum_{l=1}^{N}\hat{g}(\lambda_l)\bar{\phi}_{\lambda_l}(v_i)\phi_{\lambda_l}(v_k)}_{\text{Translation}}.
\end{equation*}
Define a window in the spectral domain as $\hat{g}(\lambda_l)=Ce^{-\tau \lambda_l}$ for a fixed $\tau > 0$, where the constant $C$ is chosen so that $\|g\|=1$. This is an example that is studied in \cite{shuman2016vertex} on the highly regular \textit{Minnesota road graph} dataset \cite{nr}. The window is defined in the graph Fourier domain since the translation operator is a kernelized operator given by integrating against $\hat{g}$. The $(i,j)$ entry of this windowed Fourier transform of a function $f:V \rightarrow \mathbb{R}^N$ is then given by:
\begin{equation} \label{eq: shuman_gstft}
    Sf(v_i,\lambda_j) \coloneqq \langle f, g_{ij}\rangle = N \sum_{k=1}^N f(v_k) \phi_{\lambda_j}(v_k) \big[\sum_{l=1}^{N}\hat{g}(\lambda_l)\bar{\phi}_{\lambda_l}(v_i)\phi_{\lambda_l}(v_k)\big].
\end{equation}
If the window is chosen to be this window $\hat{g}(\lambda_l)=Ce^{- \tau \lambda_l}$, then the above is written as:
\begin{equation*}
    Sf(v_i,\lambda_j) \coloneqq \langle f, g_{ij}\rangle = N \sum_{k=1}^N f(v_k) \big[\sum_{l=1}^{N} Ce^{- \tau \lambda_l} \phi_{\lambda_l}(v_i) \bar{\phi}_{\lambda_l}(v_k)\big] \bar{\phi}_{\lambda_j}(v_k).
\end{equation*}
Comparing with the GSTFT defined here in \ref{eq: gstft}, we can see the two definitions coincide for functions $f:V \rightarrow \mathbb{R}^N$ by unpacking the definition of the graph heat kernel $H_t$:
\begin{equation*}
\begin{split}
    (V_t f)(v_i, \lambda_j) & \coloneqq \sum_{k=1}^N f(v_k)H_t(v_i, v_k) \Bar{\phi}_{\lambda_j}(v_k) \\
    &= \sum_{k=1}^N f(v_k) \big[ \sum_{l=1}^N e^{-\lambda_l t} \phi_{\lambda_{l}}(v_i) \Bar{\phi}_{\lambda_l}(v_k)\big] \Bar{\phi}_{\lambda_j}(v_k)
\end{split}
\end{equation*}

\section{Tight Frames on Vertex-transitive and Strongly Regular Graphs} \label{sec: gstft_tight}
Given that the graph Gabor system is a frame and parametrized by $t\in \mathbb{R}_{\geq 0}$, a natural question to ask is for what values of $t$ is the frame tight? As discussed in \ref{sec: STFT}, tight frames have remarkable reconstruction properties and closely resemble orthonormal bases. Here we show that on two classes of algebraic graphs, the corresponding graph Gabor frame is tight for all $t \in \mathbb{R}_{\geq 0}$. We recall the definition of a graph automorphism which define \textit{vertex-transitive graphs} and \textit{strongly regular graphs}. As in the previous section, the graph $G=(V,E)$ is finite, undirected and unweighted, and assumed to be connected.

An \textit{automorphism} of a graph $G$ is a permutation $\pi$ of the vertices $V$ with the property that $(u,v) \in E$ if and only if $(\pi(u), \pi(v)) \in E$. The set of all automorphisms $\Pi$ of a graph together with the operation of composition $\text{Aut}(G) = (\Pi, \circ)$ is a group called the \textit{automorphism group}. A graph $G$ is called \textit{vertex-transitive} if $\text{Aut}(G)$ acts transitively on $V$. That is, for all pairs of vertices $u,v \in V$ there exists a permutation $\pi\in \text{Aut}(G)$ such that $\pi(u)=v$. This can also be understood in terms of group actions of $\text{Aut}(G)$. A graph $G$ on $|V|=n$ vertices that is neither complete nor empty is said to be \textit{strongly-regular} with parameters $(n,k,a,c)$ if 
\begin{enumerate}
    \item It is $k$-regular
    \item Every pair of adjacent vertices has $a$ common neighbors
    \item Every pair of non-adjacent vertices has $c$ common neighbors
\end{enumerate}
\begin{figure}[htpb]
    \centering
    \begin{subfigure}[b]{0.35\textwidth}
         \centering
         \includegraphics[width=\textwidth]{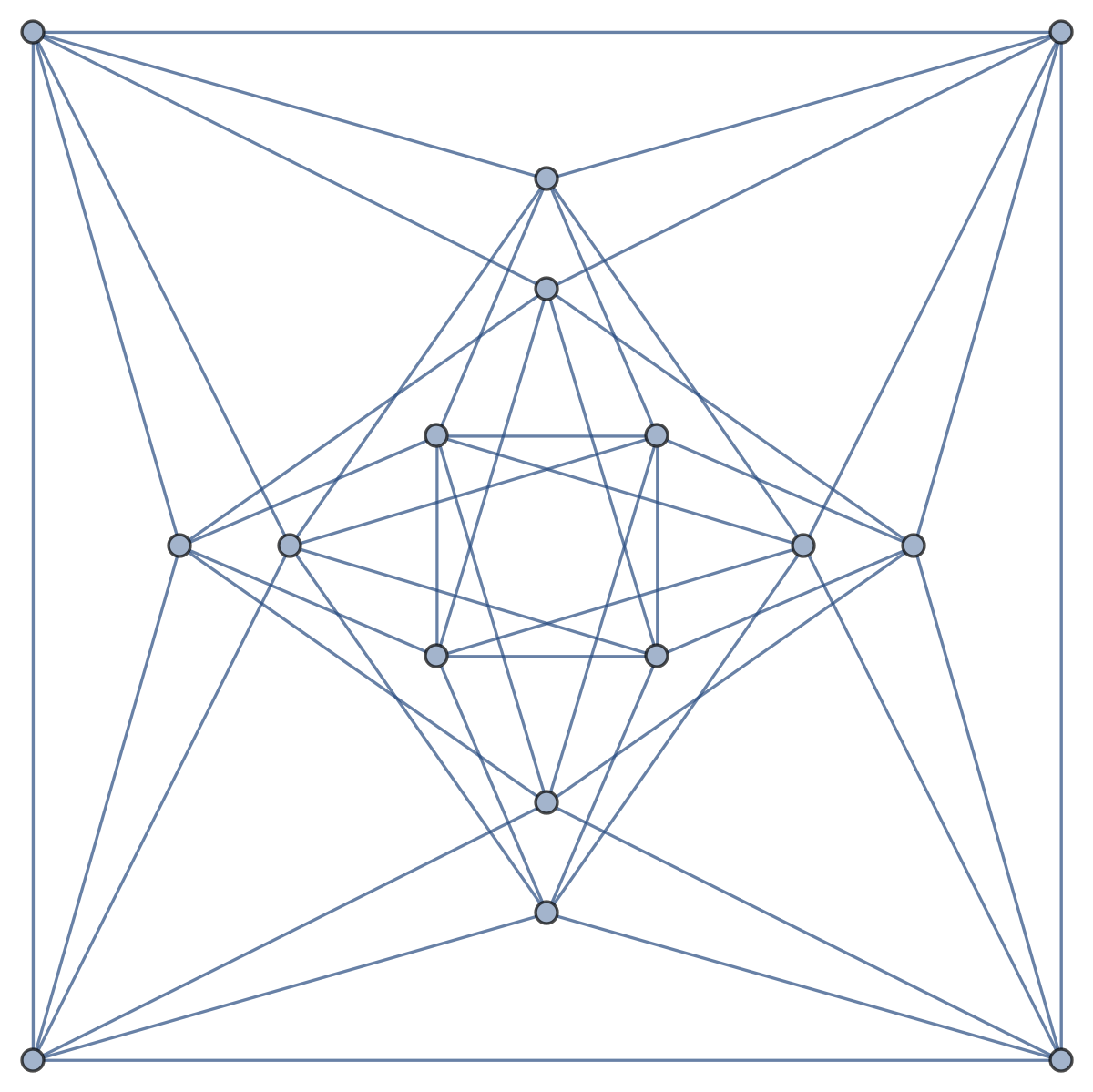}
         \caption{Shrikande graph}
         \label{fig: shrikande}
     \end{subfigure}
          \hspace{1cm}
     \begin{subfigure}[b]{0.35\textwidth}
         \centering
         \includegraphics[width=\textwidth]{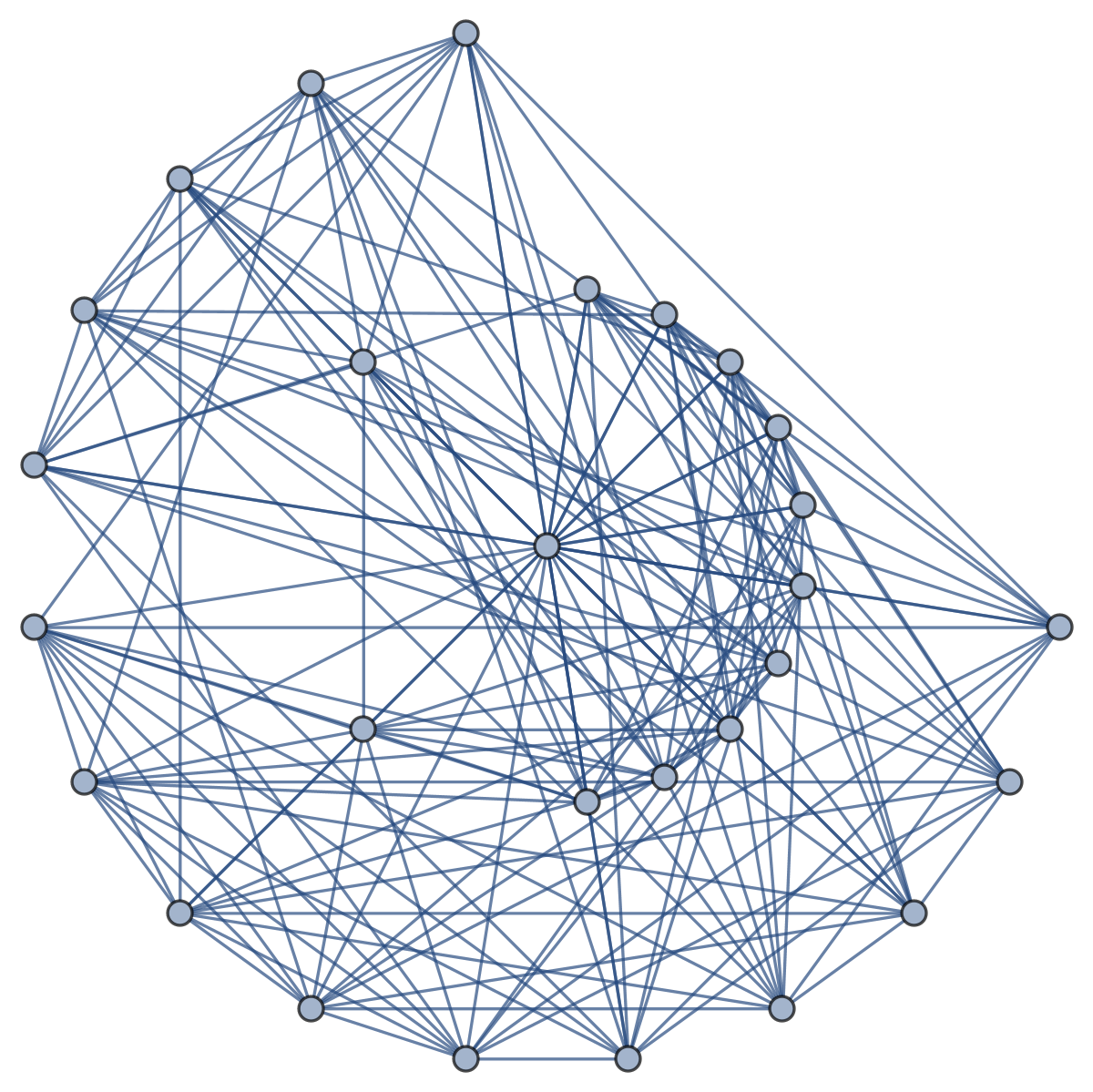}
         \caption{Chang graph}
         \label{fig: chang}
     \end{subfigure}
     \caption{The \textit{Shrikande graph} is a vertex-transitive and strongly regular graph with parameters $(16,6,2,2)$; it is the Cayley graph of $\mathbb{Z}_4 \times \mathbb{Z}_4$. The \textit{Chang graphs} are a family of strongly regular graphs with parameters $(28,12,6,4)$}.
     \label{fig: alg_graphs}
\end{figure}

\subsection{Vertex-transitive Graphs}
A consequence of a graph being vertex-transitive is that the adjacency matrix commutes with permutation matrices representing the maps $\pi \in \text{Aut}(G)$. A permutation $\pi$ of the vertex set $V$ can always be represented by a permutation matrix $P\in \mathbb{R}^{n \times n}$, where $P_{ij}=1$ if $v_i = \pi(v_j)$ and $P_{ij}=0$ otherwise. 

\begin{lemma} \label{biggs} (Biggs \cite[Chap. 15]{biggs1993algebraic})
    Let A be the adjacency matrix of a graph $G$ and $\pi$ a permutation of $V$. Then $\pi$ is an automorphism of $G$ if and only if $PA=AP$, where $P$ is the permutation matrix representing $\pi$.
\end{lemma}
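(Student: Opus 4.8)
The plan is to prove both directions at once by reducing the matrix identity $PA = AP$ to an entrywise statement about the adjacency matrix and then recognizing that statement as the defining property of an automorphism. First I would pin down the convention forced by the definition $P_{ij} = 1 \iff v_i = \pi(v_j)$: identifying a vertex $v_k$ with its index $k$, this means $P_{ik} = \delta_{i,\pi(k)}$, i.e. $P e_j = e_{\pi(j)}$. Since $\pi$ is a bijection, $P$ is orthogonal with $P^{-1} = P^{\mathsf T}$, a fact I would record for later use.

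Next I would carry out the two bookkeeping computations, using that each product has a single nonzero term in the sum. This gives $(PA)_{ij} = \sum_k \delta_{i,\pi(k)} A_{kj} = A_{\pi^{-1}(i),\,j}$ and $(AP)_{ij} = \sum_k A_{ik}\,\delta_{k,\pi(j)} = A_{i,\,\pi(j)}$. Hence $PA = AP$ holds if and only if $A_{\pi^{-1}(i),\,j} = A_{i,\,\pi(j)}$ for all $i,j$; relabeling $i \mapsto \pi(i)$ rewrites this in the symmetric form $A_{ij} = A_{\pi(i),\pi(j)}$ for all $i,j$. Equivalently, one may observe that the conjugate $PAP^{\mathsf T}$ is exactly the adjacency matrix of the $\pi$-relabeled graph, so that $PA = AP \iff PAP^{\mathsf T} = A \iff A_{ij} = A_{\pi(i),\pi(j)}$, where the first equivalence uses $P^{-1} = P^{\mathsf T}$.

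Finally I would translate this entrywise identity back into graph language. Since $A_{ij} = 1$ exactly when $(v_i,v_j) \in E$, the equality $A_{ij} = A_{\pi(i),\pi(j)}$ for every pair $(i,j)$ says precisely that $(v_i,v_j) \in E \iff (\pi(v_i),\pi(v_j)) \in E$, which is the definition of $\pi$ being an automorphism. Reading the chain of equivalences in one direction yields the forward implication (automorphism $\Rightarrow$ $PA = AP$) and in the other its converse, completing the proof.

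I do not expect a genuine obstacle here, since the content is essentially a change-of-basis identity. The only place demanding care is the index convention: tracking precisely when $\pi$ versus $\pi^{-1}$ appears in $(PA)_{ij}$ and $(AP)_{ij}$, and invoking $P^{-1} = P^{\mathsf T}$ so that the one-sided identity $PA = AP$ and the two-sided conjugation $PAP^{\mathsf T} = A$ can be used interchangeably.
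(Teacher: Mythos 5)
Your proof is correct. The paper gives no proof of this lemma at all --- it is quoted directly from Biggs \cite[Chap.~15]{biggs1993algebraic} --- and your index computation (reducing $PA=AP$ to the entrywise condition $A_{ij}=A_{\pi(i),\pi(j)}$, which is the definition of an automorphism) is exactly the standard argument found in that reference, with the $\pi$ versus $\pi^{-1}$ bookkeeping handled correctly.
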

A column of the graph heat kernel $h_i(t)$ can be interpreted as solutions to the graph heat equation with initial conditions of a dirac mass at the corresponding vertex:
\begin{equation*}
    \begin{aligned}
        \frac{d}{dt}h_i(t) = Lh_i(t) \\
        h_i(0) = \delta_{ij}
    \end{aligned}
\end{equation*}
This, together with Proposition \ref{biggs} and Theorem \ref{thm:eig} leads to the next theorem. The intuition is the following: one can map every vertex to another via a permutation such that the resulting graph is isomorphic to the original. Therefore, heat flow out of a source vertex can be mapped to heat flow out of another source vertex. Since the columns of the graph heat kernel describe such heat flow, each column will be a permutation of another and therefore have the same norm.
\begin{theorem} \label{thm:vert} The graph Gabor frame $\{\psi_{ij}(t)\}_{i,j=1}^N$ forms a tight frame for every time $t\in [0, \infty)$ when the underlying graph is vertex-transitive.
\end{theorem}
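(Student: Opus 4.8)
The plan is to verify the hypothesis of Theorem~\ref{thm:eig}---that every column of the heat kernel has the same norm---by exploiting the symmetry that the automorphism group forces on $H_t$. The engine of the argument is that every automorphism, represented by a permutation matrix $P$, commutes not merely with the adjacency matrix $A$ (Lemma~\ref{biggs}) but with the entire heat kernel $H_t$.

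First I would record the elementary but essential observation that a vertex-transitive graph is regular: since automorphisms preserve vertex degrees and $\text{Aut}(G)$ acts transitively on $V$, all vertices share a common degree $k$, so the degree matrix is the scalar $D = kI$. Consequently any permutation matrix $P$ commutes with $D$ trivially, and combining this with $PA = AP$ from Lemma~\ref{biggs} yields $PL = P(D - A) = (D - A)P = LP$. Because $P$ commutes with $L$, it commutes with every power $L^n$ and hence with the power series $H_t = \sum_{n \geq 0} \frac{(-t)^n}{n!} L^n$, giving $P H_t = H_t P$ for all $t \geq 0$.

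Next I would convert this operator identity into a statement about entries of $H_t$. Writing $P$ for the matrix of an automorphism $\pi$ and using $P^{-1} = P^{T}$, the relation $P H_t = H_t P$ rearranges to $P^{T} H_t P = H_t$, which reads entrywise as $H_t(v_{\pi(i)}, v_{\pi(j)}) = H_t(v_i, v_j)$ for all $i,j$. In other words, the column of $H_t$ indexed by $\pi(j)$ is obtained from the column indexed by $j$ by permuting its entries according to $\pi$. Since permuting the entries of a vector leaves its Euclidean norm unchanged, this immediately gives $\|H_t(\cdot, v_j)\|^2 = \|H_t(\cdot, v_{\pi(j)})\|^2$; concretely, reindexing $\sum_i H_t(v_i, v_j)^2 = \sum_i H_t(v_{\pi(i)}, v_{\pi(j)})^2$ by the bijection $i \mapsto \pi(i)$ shows the two column norms agree.

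Finally I would invoke vertex-transitivity once more: given any two vertices $v_j, v_{j'}$ there is an automorphism $\pi$ with $\pi(j) = j'$, so the previous step forces $\|H_t(\cdot, v_j)\|^2 = \|H_t(\cdot, v_{j'})\|^2$. Hence all columns of $H_t$ have equal norm for every $t \geq 0$, and Theorem~\ref{thm:eig} then yields that the frame is tight for all $t \in [0,\infty)$. I do not expect a serious obstacle: the only step requiring genuine care is the regularity observation that makes $P$ commute with $D$, and hence with $L$, since without it Lemma~\ref{biggs} alone does not deliver commutation with the heat kernel. The remainder is reindexing bookkeeping.
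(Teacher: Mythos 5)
Your proposal is correct and follows essentially the same route as the paper's proof: use regularity of vertex-transitive graphs to get $PL=LP$ from Lemma~\ref{biggs}, deduce $PH_t=H_tP$ from the power series, conclude that each column of $H_t$ is a permutation of any other and hence all column norms coincide, and finish with Theorem~\ref{thm:eig}. The only cosmetic difference is that you phrase the column relation via $P^{T}H_tP=H_t$ entrywise, while the paper writes it as $h_j=Ph_i$; these are the same observation.
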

\begin{proof}
    Let $\pi \in \text{Aut}(G)$ be the automorphism such that $\pi(v_i)=v_j$. Then the associated permutation matrix $P$ has the property that $Pe_i=e_j$. Thus:
\begin{equation*}
     h_j = H_te_j = H_t(Pe_i) = (H_tP)e_i
\end{equation*}
Now, if the graph heat kernel $H_t$ and the permutation matrix $P$ commute then we are done as $h_j = (H_t P) e_i = (PH_t)e_i =P h_i$. The graph $G$ is assumed to be vertex-transitive and hence $PA=AP$. Therefore, since $H_t=\sum_{l=0}^{\infty} \frac{(-t)^l}{l!}L^l$, we can determine if $P$ commutes with $H$ by checking if $P$ commutes with $L$. Let $k \in \mathbb{Z}^+$ be the degree of each vertex in $G$, as vertex-transitive graphs are regular \cite{biggs1993algebraic}. We have:
\begin{align*}
PL &= P(D-A) = P(kI-A) = kPI-PA \\
&= kIP-AP = (kI-A)P = (D-A)P = LP   
\end{align*}
Thus, $LP=PL$ and hence $H_tP=PH_t$. Therefore, $h_j= Ph_i$. We conclude that each column of the graph heat kernel is a permutation of every other column of the graph heat kernel, thus preserving column norm.
\end{proof}
Note that vertex transitive graphs are a large class of graphs in algebraic graph theory and contain Cayley graphs, distance transitive graphs, and symmetric graphs \cite{godsil2001algebraic}. For example, the \textit{Shrikande graph} seen in Figure \ref{fig: shrikande} is a Cayley graph on the group $\mathbb{Z}_4 \times \mathbb{Z}_4$ and therefore is also vertex-transitive \cite{godsil2001algebraic}.

\subsection{Strongly Regular Graphs}
Another large class of graphs in algebraic graph theory are the strongly regular graphs. They are an important class of graphs in frame theory and quantum computing due in part to the one-to-one correspondence between \textit{equiangular tight frames} for $\mathbb{R}^N$ that minimize the \textit{coherence} $\max_{i, i'} |\langle \psi_i, \psi_{i'} \rangle|^2$ and a subset of strongly regular graphs \cite{waldron2009construction}. In quantum computing, the equiangular tight frames for $\mathbb{C}^N$ are called SIC-POVMs (\textit{Symmetric Informationally Complete Positive Operator Valued Measurements}) and conditions on their existence are part of the \textit{Zauner conjecture} and relevant for quantum tomography and cryptography \cite{renes2004symmetric}. Further, strongly regular graphs are good candidates for state transfer in continuous-time quantum walks \cite{coutinho2014quantum} \cite{godsil2020state}.

Building on well-understood properties of the spectrum of strongly regular graphs, we show that the graph Gabor frame $\{\psi_{ij}(t)\}_{i,j=1}^N$ is tight and independent of $t \in \mathbb{R}_{\geq 0}$. The spectrum of a strongly regular graph (its adjacency matrix $A \in \mathbb{R}^{N \times N}$) is well-understood in that the eigenvalues and their multiplicities can be explicitly computed in terms of the parameters $(n,k,a,c)$ \cite{biggs1993algebraic}. There are exactly three eigenvalues $\nu_1, \nu_2, \nu_3 \in \mathbb{R}$ of the adjacency matrix $A$:
\begin{equation*}
    \nu_1 = k
\end{equation*}
\begin{equation*}
    \nu_2, \nu_3 = \frac{a - c \pm \sqrt{(a - c)^2 + 4(k-c)}}{2}
\end{equation*}
Since strongly regular graphs are indeed regular, the spectrum  of the graph Laplacian $L=kI-A$ is simply $\{\lambda_i=k-\nu_i\}_{i=1}3$. 

%are given by  have a simple relationship to the eigenvalues $\nu$ of the adjacency matrix $A$. Let $\phi$ be an eigenvector of $A$ with associated eigenvalue $\nu$. Then,
%\begin{equation*}
%\begin{aligned}
%    A\phi = \nu \phi & \iff k\phi - A\phi = 
%    k\phi - \nu \phi \\ & \iff 
 %   (kI-A)\phi = (k-\nu) \phi \\ & \iff 
 %   L\phi = (k-\nu) \phi \\  & \iff
 %   L \phi = \lambda \phi
%\end{aligned}
%\end{equation*}
Let $m_1, m_2, m_3$ denote the multiplicities of the eigenvalues $\lambda_1, \lambda_2, \lambda_3$ of $L$. The graph Laplacian $L$ in our setting has eigenvalue $\lambda_1=0$ with the corresponding normalized eigenvector $\mathbf{1}/\sqrt{N}$ \cite{chung1997spectral}. Since an eigenvalue of the frame operator is the norm-squared of a column of the graph heat kernel by Theorem \ref{thm:eig}, we have the following expression for the eigenvalues of $S(t)$ strongly regular graphs by decomposing the sum in Theorem $\ref{thm:eig}$ according to the eigenvalue multiplicities:
\begin{equation} \label{eq: eigdecomp}
    \begin{split}
        \gamma_i(t) &= \sum_{k=1}^{N} e^{-2\lambda_kt}|\phi_k(v_i)|^2 \\
        &= \frac{1}{N} + e^{-2 \lambda_2 t} \sum_{k=2}^{m_2+1}|\phi_k(v_i)|^2 + e^{-2 \lambda_3 t} \sum_{k=m_2 + 2}^N|\phi_k(v_i)|^2 
    \end{split}
\end{equation}
We leverage this expression along with a simple counting argument to establish that $\{\psi_{ij}(t)\}_{i,j=1}^N$ is tight and independent of $t \in \mathbb{R}_{\geq 0}$.

\begin{theorem} \label{thm: srg} The graph Gabor frame $\{\psi_{ij}(t)\}_{i,j=1}^N$ forms a tight frame for every time $t\in [0, \infty)$ when the underlying graph is strongly regular.
\end{theorem}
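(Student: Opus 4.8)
The plan is to show the frame-operator eigenvalues $\gamma_i(t)$ of \eqref{eq: eigdecomp} are independent of the vertex index $i$; by Theorem~\ref{thm:eig} this forces the frame to be tight for every $t \ge 0$. The coefficient $\sum_{k=2}^{m_2+1}|\phi_k(v_i)|^2$ in \eqref{eq: eigdecomp} is exactly the diagonal entry $(E_2)_{ii}$ of the orthogonal projection $E_2 = \sum_{\lambda_k=\lambda_2}\phi_k\phi_k^*$ onto the $\lambda_2$-eigenspace of $L$, and likewise for $E_3$. Hence it suffices to prove that $E_2$ and $E_3$ have constant diagonal: in that case the two constants equal $\mathrm{tr}(E_2)/N = m_2/N$ and $\mathrm{tr}(E_3)/N = m_3/N$, and \eqref{eq: eigdecomp} collapses to $\gamma_i(t) = \tfrac1N\big(1 + m_2 e^{-2\lambda_2 t} + m_3 e^{-2\lambda_3 t}\big)$, which is visibly free of $i$.

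The constant-diagonal property is where strong regularity enters, through a simple counting argument. The entry $(A^2)_{ij}$ counts walks of length two from $v_i$ to $v_j$, i.e.\ their number of common neighbors; reading off the parameters $(n,k,a,c)$ gives $(A^2)_{ii}=k$, $(A^2)_{ij}=a$ for $v_i\sim v_j$, and $(A^2)_{ij}=c$ for distinct non-adjacent $v_i,v_j$, which assembles into
\begin{equation*}
    A^2 = (k-c)I + (a-c)A + cJ ,
\end{equation*}
with $J$ the all-ones matrix. Using in addition $AJ = kJ$, an easy induction shows every power of $A$---and therefore every polynomial in $A$---lies in $\mathrm{span}\{I, A, J\}$. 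Each of $I$, $A$, $J$ has constant diagonal (namely $1$, $0$, $1$, the zero reflecting the absence of self-loops), so the same holds for any polynomial in $A$.

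Since a strongly regular graph is neither complete nor empty it has exactly three distinct eigenvalues, so in particular $\lambda_2 \ne \lambda_3$ and the three-term decomposition \eqref{eq: eigdecomp} is genuine. Each spectral idempotent $E_m$ is then a polynomial in $A$---the Lagrange interpolant through $\nu_1,\nu_2,\nu_3$ taking value $1$ at $\nu_m$ and $0$ elsewhere---hence lies in $\mathrm{span}\{I,A,J\}$ and inherits a constant diagonal, which completes the argument. Equivalently, one can bypass \eqref{eq: eigdecomp} entirely: $\gamma_i(t) = \|H_t(\cdot,v_i)\|^2 = (H_{2t})_{ii}$ and $H_{2t} = e^{-2tk}e^{2tA}$ is a power series in $A$, so $H_{2t}$ itself lies in this algebra and has constant diagonal. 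I expect the only real work to be careful bookkeeping---verifying the walk-count identity and confirming that polynomials in $A$ collapse into $\mathrm{span}\{I,A,J\}$ rather than a larger space---since the conceptual core is simply that the Bose--Mesner algebra of a strongly regular graph consists of constant-diagonal matrices.
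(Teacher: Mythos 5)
Your proof is correct, but it reaches the conclusion by a genuinely different route than the paper. The paper never forms the spectral idempotents: it treats the two unknowns $\sum_{k=2}^{m_2+1}|\phi_k(v_i)|^2$ and $\sum_{k=m_2+2}^{N}|\phi_k(v_i)|^2$ as solutions of a $2\times 2$ linear system coming from two constant-diagonal identities --- the normalization $(\Phi\Phi^*)_{ii}=1$ and the count $\|Le_i\|^2=(L^2)_{ii}=d^2+d$ --- and solves it explicitly (nonsingularity being exactly $\lambda_2\neq\lambda_3$), so the solution is visibly independent of $i$ and the difference $\gamma_i(t)-\gamma_j(t)$ vanishes. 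You instead recognize these sums as the diagonal entries $(E_2)_{ii}$, $(E_3)_{ii}$, show the adjacency algebra of a strongly regular graph is $\mathrm{span}\{I,A,J\}$ and hence consists of constant-diagonal matrices, and place the idempotents (or, more directly, $H_{2t}$ itself) in that algebra. The paper's argument is shorter and entirely elementary; yours buys more: it produces the closed form $\gamma_i(t)=\tfrac1N\bigl(1+m_2e^{-2\lambda_2 t}+m_3e^{-2\lambda_3 t}\bigr)$, i.e.\ the explicit tight frame bound, and your observation that $\gamma_i(t)=(H_{2t})_{ii}$ with $H_{2t}=e^{-2tk}e^{2tA}$ a power series in $A$ shows that tightness for all $t$ holds for any walk-regular graph (every power of $A$ has constant diagonal), which subsumes Theorem~\ref{thm: srg} and Theorem~\ref{thm:vert} in one stroke. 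One small caveat: ``neither complete nor empty'' alone does not force three distinct eigenvalues (a disjoint union of complete graphs is a counterexample); you are implicitly using the paper's standing connectivity assumption, and should say so --- though your $H_{2t}$ shortcut does not need the three-eigenvalue count at all.
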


\begin{proof}
Since the smallest eigenvalue of $L$ is zero with normalized  eigenvector $\mathbf{1}/\sqrt{N}$, we have for $i \in \{1, \ldots, N\}$:
\begin{equation} \label{eq: eigvec1}
    1 = \frac{1}{N} + \sum_{k=2}^{m_2+1} |\phi_k(v_i)|^2 + \sum_{k=m_2+2}^N |\phi_k(v_i)|^2
\end{equation}
From~\eqref{eq: eigvec1} we get 
%we can solve for the second summation 
%$\sum_{k=m_2+2}^{n}|\phi_k(v_i)|^2$ in the expression for $\gamma_i(t)$ in \ref{eq: eigdecomp}:
\begin{equation} \label{eq: second_sum}
    \begin{aligned}
        \sum_{k=m_2+2}^N |\phi_k(v_i)|^2 &= \frac{N-1}{N}-\sum_{k=2}^{m_2+1} |\phi_k(v_i)|^2
    \end{aligned}
\end{equation}
Substituting~\eqref{eq: second_sum} into~\eqref{eq: eigdecomp}  we see that for any $i\neq j$ 
%and subtracting we arrive at an expression for the graph Gabor frame operator eigenvalue difference:
\begin{equation} \label{eq: eig_diff}
    \gamma_i(t) - \gamma_j(t) = (e^{-2\lambda_2t} - e^{-2\lambda_3 t}) \Big[\sum_{k=2}^{m_2+1}|\phi_k(v_i)|^2-|\phi_k(v_j)|^2 \Big]
\end{equation}
By a simple count, we know that $||Le_i||^2=d^2+d$, since strongly regular graphs are regular. Further, via the spectral theorem the eigenvectors form an orthonormal basis and so $\Phi \Phi^*=I$. In particular,
$$d^2+d =\|Le_i\|^2=\|\Phi \Lambda \Phi^* e_i\|^2= \|\sum_{k=1}^{N} \lambda_k \phi_k \bar{\phi}_k e_i\|^2= \lambda_2^2 \sum_{k=2}^{m_2+1} |\phi_k(v_i)|^2 + \lambda_3^2  \sum_{k=m_2+2}^{N} |\phi_k(v_i)|^2.$$
%We write this out in terms of outer-products:
%\begin{equation*}
  %  \begin{split}
 %   d^2+d &= ||Le_i||^2 \\
%    &= ||\Phi \Lambda \Phi^* e_i||^2 \\
 %   &= ||\sum_{k=1}^{n} \lambda_k \phi_k \bar{\phi}_k e_i||^2 \\
 %   &= ||\sum_{k=2}^{n} \lambda_k \phi_k \bar{\phi}_ke_i||^2 \\
 %   &= ||\lambda_2 \sum_{k=2}^{m_2+1} \phi_k \bar{\phi}_k e_i + \lambda_3 \sum_{k=m_2+2}^{n} \phi_k \bar{\phi}_k e_i||^2 \\
 %   &= |\lambda_2|^2 \sum_{k=2}^{m_2+1} |\phi_k(v_i)|^2 + |\lambda_3|^2  \sum_{k=m_2+2}^{n} |\phi_k(v_i)|^2
 %   \end{split}
%\end{equation*}
It follows that 
%Solving for the summation $\sum_{k=m_2+2}^{n}|\phi_k(v_i)|^2$:
\begin{equation} \label{eq: second_sum_1}
    \sum_{k=m_2+2}^{N} |\phi_k(v_i)|^2 = \frac{(d^2+d) - \lambda_3^2 \sum_{k=2}^{m_2+1} |\phi_k(v_i)|^2}{\lambda_2^2}
\end{equation}
Using~\eqref{eq: second_sum} and~\eqref{eq: second_sum_1} we have 
%and solving for $\sum_{k=2}^{m_2+1} |\phi_k(v_i)|^2$, we get
\begin{equation} \label{eq: partial_sum_eigvec}
    \sum_{k=2}^{m_2+1} |\phi_k(v_i)|^2 = \frac{(\frac{n-1}{n})\lambda_3^2 - (d^2+d)}{\lambda_3^2 - \lambda_2^2}.
\end{equation}
Because \eqref{eq: partial_sum_eigvec} is independent of $i$ and $j$ we conclude that the term $\sum_{k=2}^{m_2+1}|\phi_k(v_i)|^2-|\phi_k(v_j)|^2=0$ in \eqref{eq: eig_diff} is zero. Consequently, 
%in the expression for frame operator eigenvalue difference $\gamma_i(t) - \gamma_j(t)$ in \ref{eq: eigdecomp} is precisely zero. Thus, 
$\gamma_i(t) - \gamma_j(t) = 0$ for all $i\neq j$.
\end{proof}

\section*{Acknowledgement}
This work was partially supported by grants from the National Science Foundation under grant numbers DMS 1814253 and DMS 2205771. 
\bibliography{bib.bib}
\bibliographystyle{abbrv}

\end{document}